\newenvironment{proof}{\par\smallskip\noindent{\sl Proof.\/}}{\finprf}
\newtheorem{lemma}{Lemma}
\newtheorem{theorem}[lemma]{Theorem}
\newcommand{\be}[1]{\begin{equation}\ifthenelse{\equal{#1}{}}{}{\label{#1}}}
\newcommand{\ee}{\end{equation}}
\newcommand{\N}{\mathbb{N}}
\newcommand{\R}{\mathbb{R}}
\newcommand{\C}{\mathbb{C}}
\newcommand{\nrmphi}{\|\phi\|_{L^2(\R^3)}^2}
\newcommand{\finprf}{\unskip\null\hfill$\;\square$\vskip 0.3cm}
\renewcommand{\(}{\left(}
\renewcommand{\)}{\right)}
\newcommand{\binom}[2]{{#1\choose #2}}
\newcommand{\eqn}[1]{(\ref{#1})}
\newcommand{\A}{\mathrm A}
\newcommand{\B}{\mathrm B}
\begin{document}

\title[Critical magnetic field in relativistic atomic physics]{Characterization of the critical magnetic field in the Dirac-Coulomb equation}

\author{J Dolbeault$^1$, M Esteban$^2$ and M Loss$^3$}

\address{{}$^{1,2}$ Ceremade (UMR CNRS 7534), Universit\'e Paris Dauphine, Place de Lattre de Tassigny, F-75775 Paris C\'edex 16, France\\
{}$^3$ School of Mathematics, Georgia Institute of Technology Atlanta, GA 30332, USA}
\ead{{}$^1$dolbeaul@ceremade.dauphine.fr, {}$^2$esteban@ceremade.dauphine.fr, {}$^3$loss@math.gatech.edu}

\begin{abstract}
We consider a relativistic hydrogenic atom in a strong magnetic field. The ground state level depends on the strength of the magnetic field and reaches the lower end of the spectral gap of the Dirac-Coulomb operator for a certain critical value, the {\sl critical magnetic field.\/} We also define a critical magnetic field in a Landau level ansatz.

In both cases, when the charge $Z$ of the nucleus is not too small, these critical magnetic fields are huge when measured in Tesla, but not so big when the equation is written in dimensionless form. When computed in the Landau level ansatz, orders of magnitude of the critical field are correct, as well as the dependence in $Z$. The computed value is however significantly too big for a large $Z$, and the wave function is not well approximated. Hence, accurate numerical computations involving the Dirac equation cannot systematically rely on the Landau level ansatz.

Our approach is based on a scaling property. The critical magnetic field is characterized in terms of an equivalent eigenvalue problem. This is our main analytical result, and also the starting point of our numerical scheme.
\end{abstract}

\pacs{31.30.J}

\vspace{2pc}\noindent{\it Keywords}: Relativistic quantum mechanics, ground state, min-max levels, magnetic field, Dirac equation, Dirac-Coulomb Hamiltonian, relativistic hydrogen atom, pair creation, Landau levels


\maketitle

\section{Introduction}\label{Sec:intro}

It is widely accepted in physics that eigenstates in strong magnetic fields can be well approximated in a Landau level ansatz. In this paper, we investigate the validity of such an ansatz in the case of the magnetic Dirac-Coulomb equation for large magnetic fields. More specifically, we study the {\sl critical threshold\/} for the magnetic field defined as the smallest value of the magnetic field for which the lowest eigenvalue (the ground state) in the gap of the Dirac operator reaches its lower end. After a reformulation in terms of an equivalent minimization problem, we compare the values of the critical magnetic field with and without ansatz, both from an analytical and a numerical point of view.

Our main theorem is stated in Section~\ref{Sec:mainthm}. It characterizes the critical magnetic field as a function of the lowest energy for an explicit eigenvalue problem. Section~\ref{Sec:Proofs} will be devoted to its proof. A Landau level ansatz is then defined in Section~\ref{Sec:LandauLevels} and some comparison results for the critical magnetic field, with and without ansatz, are given there. A numerical method based on Theorem~\ref{Thm:bcritformula} has been implemented in both cases. Results are given in Section~\ref{Sec:Numerics} and compared with earlier works, see \cite{SchlueterEtAl}. They show that for large coupling constants (in the electrostatic field), the critical threshold is well below the critical threshold in the Landau ansatz.

Critical magnetic fields are huge and can eventually be encountered only in some extreme situations like {\sl magnetars,\/} which are neutron stars with intense magnetic fields, see \cite{AJ99}. This is the only known domain of physics for which our computations might eventually be relevant, see \cite{DEL} for a discussion. In the Dirac-Coulomb model, the value of the critical magnetic field only provides an order of magnitude of the field strength for which pair creation could eventually occur. Such a phenomenon should of course be studied in a full QED framework and the computations done using the Dirac-Coulomb operator are only an indication on the scales that should be taken into account. See \cite{pickl} for more details. For a review on large magnetic fields in physics, see \cite{duncan}.

\section{Main results}\label{Sec:mainthm}

The magnetic Dirac operator with Coulomb potential $\nu/|x|$ can be written as :
\be{operator}
H_B:= \left( \begin{array}{cc} {\mathbb I}-\nu/|x| & -\,i\,\sigma \cdot (\nabla-i\,\A) \\ -\,i\,\sigma \cdot (\nabla-i\,\A) & -{\mathbb I}-\nu/|x| \end{array} \right)
\ee
where $\A$ is a magnetic potential corresponding to $\B$, and $\mathbb I$ and $\sigma_k$ are respectively the identity and the Pauli matrices
\be{}
\mathbb I=\left( \begin{array}{cc} 1 & 0 \\ 0 & 1 \\ \end{array} \right)\!,\;
\sigma_1=\left( \begin{array}{cc} 0 & 1 \\ 1 & 0 \\ \end{array} \right)\!,\; \sigma_2=\left( \begin{array}{cc} 0 & -i \\ i & 0 \\ \end{array}\right)\!,\; \sigma_3=\left( \begin{array}{cc} 1 & 0\\ 0 &-1\\ \end{array}\right)\! .
\ee
Let $\B=(0,0,B)$ be a constant magnetic field and $\A_B$ the associated magnetic potential. For any $x=(x_1,x_2,x_3)\in\R^3$, define
\be{}
P_B:=-\,i\,\sigma \cdot (\nabla-i\,\A_B(x))\;,\quad \A_B(x):=\frac B2\left(\begin{array}{c} -x_2\\ x_1\\ 0\end{array}\right)\,,
\ee
and consider the functional
\be{}
J[\phi,\lambda,\nu,B]:= \int_{\R^3}\left(\frac{|P_B \phi|^2} {1+\lambda+\frac\nu{|x|}}+(1-\lambda)\,|\phi|^2-\frac\nu{|x|}\,|\phi|^2\right)\,d^3x\label{jay}
\ee
on the set of admissible functions ${\mathcal A}(\nu,B):=\{\phi\in C^\infty_0(\R^3)\,:\,\nrmphi=1\,,\;\lambda\mapsto J[\phi,\lambda,\nu,B]\;\mbox{\sl changes sign in}\;(-1,+\infty)\}$. The essential spectrum of $H_B$ is $\R\setminus (-1,1)$. By \cite[Theorem 1]{DEL}, the smallest eigenvalue in $(-1,1)$ of $H_B$ is
\be{}
\lambda_1(\nu,B):=\inf_{\phi\in{\mathcal A}(\nu,B)}\lambda[\phi,\nu,B]
\ee
where $\lambda=\lambda[\phi,\nu,B]$ is either the unique solution to $J[\phi,\lambda,\nu,B]=0$ if $\phi\in{\mathcal A}(\nu,B)$, or $\lambda[\phi,\nu,B]=-1$ if $J[\phi,-1,\nu,B]\leq 0$. Also see \cite{DEL} for the relation of the two-components spinors in ${\mathcal A}(\nu,B)$ with the four-components spinors and the action of $H_B$ on them. 

The {\sl critical magnetic field\/} is defined by
\be{}
B(\nu):=\inf\left\{B>0\;:\;\liminf_{b\nearrow B}\lambda_1(\nu,b)=-1\right\}\;.
\ee
Define the auxiliary functional
\be{functionalbcrit}
\mathcal{E}_{B,\nu}[\phi]:=\int_{\R^3}\frac{|x|}{\nu}\,|P_B\,\phi|^2\,d^3x-\int_{\R^3}\frac{\nu}{|x|}\,|\phi|^2\,d^3x\;,
\ee
that is $\,\mathcal{E}_{B,\nu}[\phi]+2\,\nrmphi=J[\phi,-1,\nu,B]\,$. The scaling $\phi_B:=B^{3/4}\,\phi\left(B^{1/2}\,x\right)$ preserves the $L^2$ norm, and yields
\be{Scaling}
\mathcal{E}_{B,\nu}[\phi_B] = \sqrt B\,\mathcal{E}_{1,\nu}[\phi]\;.
\ee
We define
\be{Eqn:Equiv}
\mu(\nu):=\inf_{0\not\equiv\phi\in C^\infty_0(\R^3)}\frac{\mathcal{E}_{1,\nu}[\phi]}{\nrmphi}\;.
\ee
Recall that $\lambda_1(\nu,B)$ is characterized as an eigenvalue of $H_B$ only as long as it takes values in $(-1,1)$. If we could take the limit $B\to B(\nu)$, we would formally get that $-1=\lambda_1(\nu,B(\nu))$, which, still formally, amounts to $\inf_{0\not\equiv\phi\in C^\infty_0(\R^3)}J[\phi,-1,\nu,B]=0$. It is therefore natural to expect that $\sqrt{B(\nu)}\,\mu(\nu)+2=0$. Proving this is the purpose of our main result. 
\begin{theorem}\label{Thm:bcritformula} For all $\nu\in(0,1)$, $\mu(\nu)$ is negative, finite, 
\be{}
B(\nu)=\frac 4{\mu(\nu)^2}
\ee
and $B(\nu)$ is a continuous, monotone decreasing function of $\nu$ on $(0,1)$. \end{theorem}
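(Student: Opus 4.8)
The plan is to reduce everything to three properties of the number $\mu(\nu)$ in \eqn{Eqn:Equiv} -- that it is negative, finite, continuous and strictly decreasing -- and then to read off $B(\nu)$ from the explicit value of the ground-state slice $\inf_{\|\phi\|=1}J[\phi,-1,\nu,B]$. First I would compute this slice. Combining the identity $\mathcal{E}_{B,\nu}[\phi]+2\,\nrmphi=J[\phi,-1,\nu,B]$ stated after \eqn{functionalbcrit}, the scaling \eqn{Scaling}, and the fact that $\phi\mapsto\phi_B=B^{3/4}\phi(B^{1/2}x)$ is a norm-preserving bijection of $C^\infty_0(\R^3)$, one gets
\[\inf_{0\not\equiv\phi}\frac{J[\phi,-1,\nu,B]}{\nrmphi}=2+\sqrt B\,\mu(\nu).\]
Granting $\mu(\nu)\in(-\infty,0)$ (treated below), the right-hand side is $\le 0$ exactly when $B\ge 4/\mu(\nu)^2$. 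The min-max recalled after \eqn{jay} says that, for normalized $\phi$, $\lambda[\phi,\nu,B]=-1$ iff $J[\phi,-1,\nu,B]\le 0$, and $\lambda_1(\nu,B)=\inf_\phi\lambda[\phi,\nu,B]\ge -1$; since $J$ is monotone decreasing in $\lambda$, a short argument upgrades this to the equivalence
\[\lambda_1(\nu,B)=-1\iff\inf_{\|\phi\|=1}J[\phi,-1,\nu,B]\le 0\iff B\ge \frac{4}{\mu(\nu)^2}.\]

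Next I would pass to the $\liminf$ in the definition of $B(\nu)$, writing $S:=\{\,B>0:\liminf_{b\nearrow B}\lambda_1(\nu,b)=-1\,\}$. For $B>4/\mu(\nu)^2$ the previous step gives $\lambda_1(\nu,b)=-1$ on a left neighbourhood of $B$, so $B\in S$. For the boundary I would use the clean estimate $\lambda_1(\nu,b)\le 1+\sqrt b\,\mu(\nu)$, which follows from the pointwise inequality $J[\phi,-1,\nu,b]\ge J[\phi,\lambda_1,\nu,b]+(1+\lambda_1)$ (valid as $J$ decreases in $\lambda$) together with $\inf_\phi J[\phi,\lambda_1,\nu,b]=0$ when $\lambda_1\in(-1,1)$: as $b\nearrow 4/\mu(\nu)^2$ the right-hand side tends to $-1$ while $\lambda_1\ge-1$ always, so $\lambda_1(\nu,b)\to-1$ and $4/\mu(\nu)^2\in S$. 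Conversely, for $B<4/\mu(\nu)^2$ I would invoke the joint continuity of $(\lambda,b)\mapsto\inf_{\|\phi\|=1}J[\phi,\lambda,\nu,b]$ near $(-1,B)$: its value at $(-1,B)$ is $2+\sqrt B\,\mu(\nu)>0$, hence it stays positive for $\lambda$ slightly above $-1$ and $b$ slightly below $B$, which by the monotone characterization keeps $\lambda_1(\nu,b)$ bounded away from $-1$, so $B\notin S$. The two inclusions give $S=[4/\mu(\nu)^2,\infty)$ and therefore $B(\nu)=\inf S=4/\mu(\nu)^2$.

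It then remains to prove the properties of $\mu$. For \emph{negativity} I would test with a lowest-Landau-level spinor, annihilated in the $(x_1,x_2)$-plane by the aligned part of $P_1$ and multiplied by a slowly varying profile in $x_3$, so that $\int_{\R^3}|x|\,|P_1\phi|^2\,d^3x$ is arbitrarily small while $\int_{\R^3}\frac{|\phi|^2}{|x|}\,d^3x>0$, forcing $\mathcal{E}_{1,\nu}[\phi]<0$. For \emph{finiteness} I would establish a weighted Hardy--Dirac inequality $\int_{\R^3}\frac{|\phi|^2}{|x|}\,d^3x\le \alpha\int_{\R^3}|x|\,|P_1\phi|^2\,d^3x+\beta\,\nrmphi$, which bounds $\mathcal{E}_{1,\nu}$ from below; the integration by parts is clean because $x\cdot\A_B(x)=0$, and $\nu<1$ is what permits absorbing the spin (Pauli) term. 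For \emph{continuity and strict monotonicity}, writing $\mathcal{E}_{1,\nu}[\phi]/\nrmphi=a(\phi)/\nu-\nu\,c(\phi)$ with $a,c\ge0$, each map $t=1/\nu\mapsto t\,a-c/t$ is concave, so $t\mapsto\mu(1/t)$ is concave on $(1,\infty)$ and hence continuous; moreover every term strictly decreases in $\nu$, and the lower bound forces near-minimizers to satisfy $c(\phi)\ge|\mu(\nu)|/(2\nu)>0$, which turns monotonicity into strict decrease. Since $\mu<0$, $B(\nu)=4/\mu(\nu)^2$ is then continuous and strictly decreasing in $\nu$.

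I expect the two genuine difficulties to be, first, the finiteness of $\mu(\nu)$, that is the weighted Hardy--Dirac inequality bounding $\mathcal{E}_{1,\nu}$ below, where the restriction $\nu<1$ is indispensable and the spin term must be controlled; and second, the rigorous treatment of the $\liminf$ defining $B(\nu)$. The formal statement $\lambda_1(\nu,B(\nu))=-1$ must be justified through the continuity of the min-max value $\inf_\phi J[\phi,\lambda,\nu,b]$ as $(\lambda,b)\to(-1,B(\nu))$, and it is precisely this step that pins the infimum $\inf S$ down to exactly $4/\mu(\nu)^2$ rather than to some smaller value.
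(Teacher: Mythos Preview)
Your treatment of the inequality $B(\nu)\le 4/\mu(\nu)^2$ is sound, and the bound $\lambda_1(\nu,b)\le 1+\sqrt b\,\mu(\nu)$ is a clean way to see directly that $4/\mu(\nu)^2\in S$; this is essentially the paper's easy direction, phrased a bit more quantitatively. Your plan for the properties of $\mu$ (negativity via a lowest-Landau-level trial state, continuity and monotonicity via concavity of $\nu\mapsto\nu\,\mathcal E_{1,\nu}[\phi]$) also matches the paper's Lemma~\ref{Lem6}.

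The gap is in the opposite inequality. To exclude $B<4/\mu(\nu)^2$ from $S$ you invoke ``joint continuity of $(\lambda,b)\mapsto\inf_{\|\phi\|=1}J[\phi,\lambda,\nu,b]$ near $(-1,B)$''. But $J$ is \emph{decreasing} in $\lambda$, so the value $2+\sqrt B\,\mu(\nu)>0$ at $\lambda=-1$ gives only an \emph{upper} bound on the infimum for $\lambda>-1$; what you actually need is lower semicontinuity as $\lambda\searrow-1$. This is not a formality: the kinetic weight $(1+\lambda+\nu/|x|)^{-1}$ equals $|x|/\nu$ only at $\lambda=-1$, while for any $\lambda>-1$ it saturates at $1/(1+\lambda)$ for large $|x|$. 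A sequence $\phi_k$ drifting to spatial infinity can therefore make $J[\phi_k,\lambda_k,\nu,b]$ drop below $J[\phi_k,-1,\nu,b]$ by an amount that does \emph{not} vanish with $1+\lambda_k$. Ruling out this leak to infinity is precisely the content of the hard direction, so appealing to continuity here is circular unless you supply an independent argument.

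The paper's route is genuinely different. It fixes $B=B(\nu)$, takes $\nu_n\nearrow\nu$ with $\lambda^n:=\lambda_1(\nu_n,B)>-1$, $\lambda^n\to-1$, and works with the actual minimizers $\phi_n$. The key device is a cutoff $\chi_n$ at radius $R_n$ chosen so that $(1+\lambda^n)R_n\to 0$ while $R_n\to\infty$; on the support of $\tilde\phi_n=\chi_n\phi_n$ one then has
\[
\frac{1}{1+\lambda^n+\nu_n/|x|}\ \ge\ \frac{|x|}{(1+\lambda^n)\,2R_n+\nu_n}\ =\ \frac{|x|}{\mu_n}\,,\qquad \mu_n\to\nu\,,
\]
which converts the $J$-kinetic term into the $\mathcal E$-kinetic term. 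The commutator $P_B(\chi_n\phi_n)-\chi_n P_B\phi_n=(P_0\chi_n)\phi_n$ is controlled by the elementary inequality $|a|^2\ge|a+b|^2/(1+\varepsilon)-|b|^2/\varepsilon$ (Lemma~\ref{Lem7}), with $\varepsilon_n\to0$ and $\varepsilon_nR_n\to\infty$ to kill the error. This truncation-and-error-balancing step, not an abstract continuity statement, is the missing ingredient in your plan; once you insert it, the argument closes, but at that point it has become the paper's proof.
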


\section{Proofs}\label{Sec:Proofs}

\subsection{Preliminary results}

\begin{lemma}\label{Lem6} On the interval $(0,1)$, the function $\nu\mapsto\mu(\nu)$ is continuous, monotone decreasing and takes only negative real values.\end{lemma}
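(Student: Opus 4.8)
The plan is to prove the three properties of $\mu(\nu)$ separately, working directly from the variational definition \eqn{Eqn:Equiv}. Throughout I write $\mathcal{E}_{1,\nu}[\phi]=\tfrac1\nu K[\phi]-\nu\,V[\phi]$ with $K[\phi]:=\int_{\R^3}|x|\,|P_1\phi|^2\,d^3x\ge 0$ and $V[\phi]:=\int_{\R^3}|\phi|^2/|x|\,d^3x\ge 0$, so that $\mu(\nu)=\inf_\phi\big(\tfrac1\nu K[\phi]-\nu\,V[\phi]\big)/\nrmphi$. Monotonicity comes first and is essentially free: for fixed $\phi$ the map $\nu\mapsto\tfrac1\nu K[\phi]-\nu\,V[\phi]$ has derivative $-K[\phi]/\nu^2-V[\phi]\le 0$, hence is non-increasing, and an infimum of non-increasing functions is non-increasing. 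For strictness I would use the negativity proved below: along a normalized minimizing sequence for $\mu(\nu_1)<0$ one has $\nu_1 V[\phi_n]\ge \tfrac1{\nu_1}K[\phi_n]-\mu(\nu_1)/2\ge -\mu(\nu_1)/2>0$, so $V[\phi_n]$ stays bounded away from $0$; testing $\mu(\nu_2)$ with the same sequence and discarding the nonnegative $K$ term gives $\mu(\nu_2)\le \mu(\nu_1)-(\nu_2-\nu_1)\liminf_n V[\phi_n]<\mu(\nu_1)$ for $\nu_2>\nu_1$.

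For negativity I would exhibit a test function built on a Landau zero mode. In the symmetric gauge the transverse Pauli operator $\sigma\cdot(\nabla-i\A_1)$, acting in the $(x_1,x_2)$ variables, annihilates the spin-polarized lowest Landau level $f(x_1,x_2)\,\chi$, a Gaussian times a fixed spinor. Taking $\phi=f(x_1,x_2)\,g(x_3)\,\chi$ then gives $P_1\phi=-i\,\sigma_3\,f\,g'\,\chi$, whence $K[\phi]=\int |x|\,|f|^2|g'|^2$ and $V[\phi]=\int |f|^2|g|^2/|x|$. Choosing $g=g_n$ equal to $1$ on $\{|x_3|\le n\}$ and cut off on $\{n\le |x_3|\le 2n\}$ makes $K[\phi]=O(1)$ while $V[\phi]\gtrsim \log n$, so $K[\phi]/V[\phi]\to 0$ and, for $n$ large, $K[\phi]<\nu^2 V[\phi]$, i.e. $\mathcal{E}_{1,\nu}[\phi]<0$. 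Since the genuine zero mode is not compactly supported I would first fix such a large $n$ with strict margin and then mollify $f$ into $C^\infty_0$, the strict inequality persisting; this yields $\mu(\nu)<0$ and also explains why a strong field pushes the energy down.

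Finiteness, $\mu(\nu)>-\infty$, is where I expect the real work. The clean Hardy bound $V[\phi]\le \int_{\R^3}|x|\,|(\nabla-i\A_1)\phi|^2\,d^3x$ follows from $\nabla\cdot(x/|x|)=2/|x|$, one integration by parts, and Cauchy--Schwarz. Rewriting the weighted covariant gradient through the Pauli identity, $\int|x|\,|\sigma\cdot D\phi|^2=\int|x|\,|D\phi|^2-\int|x|\,\langle\phi,\sigma_3\phi\rangle-i\int\overline{\phi}\,\sigma\cdot(\hat x\times D)\phi$ with $D=\nabla-i\A_1$ and $\hat x=x/|x|$, leaves the sign-indefinite, $|x|$-weighted term $\int|x|\,\langle\phi,\sigma_3\phi\rangle$, which is \emph{not} controlled by $\nrmphi$ and is large exactly on the Landau zero modes used above. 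The main obstacle is therefore to absorb this term into $K[\phi]$ using the full magnetic kinetic energy: this is a magnetic relativistic Hardy inequality, valid with a $\nu$-dependent constant precisely because $\nu<1$, degenerating to the field-free inequality $\mathcal{E}_{0,\nu}[\phi]\ge 0$ when $B=0$. As a fallback route I would instead invoke \cite{DEL}: since $\lambda_1(\nu,0)=\sqrt{1-\nu^2}>-1$, weak-field stability provides some $B_0>0$ with $\lambda_1(\nu,B_0)>-1$, equivalently $\mathcal{E}_{B_0,\nu}\ge -2\,\nrmphi$, and the scaling \eqn{Scaling} then gives $\mu(\nu)\ge -2/\sqrt{B_0}>-\infty$.

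Continuity I would obtain for free by a convexity trick once finiteness is in hand. In the variable $s=1/\nu$ each normalized $\phi$ contributes $f_\phi(s)=s\,K[\phi]-V[\phi]/s$, which is concave in $s$ since its second derivative is $-2V[\phi]/s^3\le 0$. Hence $\tilde\mu(s):=\mu(1/s)=\inf_\phi f_\phi(s)$, being an infimum of concave functions, is concave on $s\in(1,\infty)$; a concave function that is finite on an open interval is automatically continuous there, so $\tilde\mu$, and therefore $\mu(\nu)=\tilde\mu(1/\nu)$, is continuous on $(0,1)$.
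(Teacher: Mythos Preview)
Your argument is correct and follows essentially the same route as the paper: monotonicity from the definition, continuity via concavity of an auxiliary function (you take $s=1/\nu$ and show $\mu(1/s)$ is concave, while the paper multiplies by $\nu$ and shows $\nu\,\mu(\nu)=\inf_\phi\big(K[\phi]-\nu^2 V[\phi]\big)$ is concave), and negativity via a lowest-Landau-level test function (you work directly at $B=1$ with a cutoff at scale $n$, while the paper works at large $B$ and then rescales, quoting \cite{DEL} for the estimate $\mathcal{E}_{B,\nu}[\phi]\le C_1/\nu+C_2\nu-C_3\nu\log B$). Your treatment of finiteness---the fallback through $\lambda_1(\nu,B_0)>-1$ and scaling---is actually more explicit than the paper's, which simply asserts that the infimum is bounded.
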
 
\begin{proof} The monotonicity of the function $\mu(\nu)$ is a consequence of its definition. On the other hand, the functional
\be{}
(\nu,\phi)\mapsto\nu\,\mathcal{E}_{1,\nu}[\phi]=\int_{\R^3}|x|\,|P_1\,\phi|^2\,d^3x-\int_{\R^3}\frac{\nu^2}{|x|}\,|\phi|^2\,d^3x
\ee
is a concave, bounded function of $\nu\in(0,1)$, for any $\phi\in C^\infty_0(\R^3)$, and so is its infimum with respect to $\phi$. A bounded concave function is continuous. 

Next, consider the function
\be{}
\phi(x):=\sqrt{\frac B{2\,\pi}}\,e^{-\frac{B}{4}(|x_1|^2+|x_2|^2)} \,\binom{f (x_3)}{0}\quad\forall\;x=(x_1,x_2,x_3)\in\R^3
\ee
for some $f\in C^\infty_0(\R,\R)$ such that $f\equiv 1$ for $|x|\leq\delta$, $\delta>0$, and \hbox{$\|f\|_{_{L^2(\R^+)}}=1$.} Note that $\phi\in {\rm Ker}(P_B+i\,\sigma_3\,\partial_{x_3})$ and so, $P_B\phi=-i\,\sigma_3\,\partial_{x_3}\phi$. Moreover, the function $\phi$ is normalized in $L^2(\R^3)$. Same computations as in \cite[proof of Proposition 6]{DEL} show that 
\be{}
\mathcal{E}_{B,\nu}[\phi]\leq\frac{C_1}{\nu}+C_2\,\nu- C_3\,\nu\log B\ ,
\ee
where $C_i$, $i=1$, $2$, $3$, are positive constants which depend only on $f$. For $B\geq 1$ large enough, $\mathcal{E}_{B,\nu}[\phi]<0$. With $\phi_{1/B}(x)=B^{-3/4}\,\phi\left(B^{-1/2}\,x\right)$, $\mathcal{E}_{1,\nu}[\phi_{1/B}]=B^{-1/2}\,\mathcal{E}_{B,\nu}[\phi]<0$ by~\eqn{Scaling}. This proves that $\mu(\nu)$ is negative. \end{proof}

\begin{lemma}\label{Lem7} For any $a$, $b\in\R^d$ and any $\varepsilon>0$,
\be{}
|a|^2\geq\frac{|a+b|^2}{1+\varepsilon}-\frac{|b|^2}\varepsilon\;.
\ee
\end{lemma}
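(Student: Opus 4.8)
The plan is to recognize the claimed bound as a rescaled form of Young's inequality and reduce it, by clearing the denominator, to the nonnegativity of a perfect square. First I would multiply the inequality by the positive factor $1+\varepsilon$, so that the assertion becomes equivalent to
\[
|a+b|^2 \;\le\; (1+\varepsilon)\,|a|^2 + \left(1+\frac1\varepsilon\right)|b|^2\;.
\]
Expanding the left-hand side as $|a+b|^2=|a|^2+2\,a\cdot b+|b|^2$ and cancelling the common terms $|a|^2$ and $|b|^2$, this is in turn equivalent to the single bound $2\,a\cdot b \le \varepsilon\,|a|^2 + \frac1\varepsilon\,|b|^2$.

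This last inequality is exactly Young's inequality, and I would establish it by completing a square. Using the Cauchy--Schwarz inequality $a\cdot b\le|a|\,|b|$, one has
\[
\varepsilon\,|a|^2 + \frac1\varepsilon\,|b|^2 - 2\,a\cdot b \;\ge\; \varepsilon\,|a|^2 + \frac1\varepsilon\,|b|^2 - 2\,|a|\,|b| \;=\; \left(\sqrt\varepsilon\,|a| - \frac{|b|}{\sqrt\varepsilon}\right)^{\!2} \;\ge\; 0\;,
\]
which gives the desired estimate. Reversing the chain of equivalences then yields the statement of the lemma for every $\varepsilon>0$ and every pair $a,b\in\R^d$.

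There is essentially no genuine obstacle here; the only point deserving a word of care is that every manipulation preserves the direction of the inequality, which is legitimate precisely because $1+\varepsilon>0$ and $\varepsilon>0$, so multiplying and dividing by these factors is harmless. The lemma will subsequently be applied with $a$ and $b$ chosen so that $a+b$ reproduces the shifted gradient appearing in $|P_B\phi|^2$, allowing one to absorb a perturbation of the magnetic quadratic form at the modest cost of the error term $\frac1\varepsilon\,|b|^2$.
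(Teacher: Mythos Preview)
Your proof is correct and follows essentially the same route as the paper: both clear denominators and reduce the inequality to the nonnegativity of a square. The paper does it in a single line via the vector identity $\varepsilon(1+\varepsilon)\big[|a|^2-\frac{|a+b|^2}{1+\varepsilon}+\frac{|b|^2}{\varepsilon}\big]=|\varepsilon a-b|^2$, whereas you pass through Cauchy--Schwarz and a scalar square, but this is only a cosmetic difference.
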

\begin{proof} A simple computation shows that
\be{}
\varepsilon\,(1+ \varepsilon)\left[|a|^2-\frac{|a+b|^2}{1+\varepsilon}+\frac{|b|^2}\varepsilon\right]=(\varepsilon\,a-b)^2\geq 0\;.
\ee
\end{proof}

\subsection{Proof of Theorem \ref{Thm:bcritformula}}

Let us first prove that $B(\nu)\leq\tilde B(\nu):=4/\mu(\nu)^2$. By definition of $\mu(\nu)$ and \eqn{Scaling},
\be{formulabcrit}
\tilde B(\nu)=\sup\left\{B>0\;:\;\inf_\phi\(\mathcal{E}_{B,\nu}[\phi]+2\,\nrmphi\)\geq 0\right\}\;.
\ee
By definition of $B(\nu)$, $B<B(\nu)$ means that $\lambda_1(\nu,B)>-1$. Since $\lambda_1(\nu,B)$ is the infimum of $\lambda[\phi,\nu,B]$ with respect to $\phi\in C^\infty_0(\R^3)$ and $J[\phi,\lambda,\nu,B]$ is decreasing in $\lambda$,
\be{}
\mathcal{E}_{B,\nu}[\phi]+2\,\nrmphi\geq J[\phi,\lambda_1(\nu,B),\nu,B]\geq J[\phi,\lambda[\phi,\nu,B],\nu,B]=0
\ee
for all $\phi$, so that $B\leq \tilde B(\nu)$. This proves that $B(\nu)\leq \tilde B(\nu)$.

\medskip To prove the opposite inequality, let $B=B(\nu)$ and consider a sequence $(\nu_n)_{n\in\N}$ such that $\nu_n\in(0,\nu)$, $\lim_{n\to\infty}\nu_n=\nu$, $\lambda^n:=\lambda_1(\nu_n,B)>-1$ and $\lim_{n\to\infty}\lambda^n=-1$. Let~$\phi_n$ be the optimal function associated to~$\lambda^n$: $J[\phi_n,\lambda^n,\nu_n,B]=0$.

We define a sequence of truncation functions $(\chi_n)_{n\in\N}$ as follows. Consider first a nonnegative smooth radial function $\chi$ on $\R^+$ such that $\chi\equiv 1$ on $[0,1]$, $0\leq\chi\leq 1$ and $\chi\equiv 0$ on $[2,\infty)$. Then we set $\chi_n(x):=\chi(|x|/R_n)$ for some increasing sequence $(R_n)_{n\in\N}$ such that $\lim_{n\to\infty}R_n=\infty$. By applying Lemma~\ref{Lem7} to
\be{}
P_B\,\phi_n=\underbrace{(P_B\, \tilde\phi_n)\,\chi_n}_{=a}+\underbrace{\big[-(P_0\,\chi_n)\,\phi_n\big]}_{=b}\;,
\ee
where $\tilde\phi_n:= \phi_n\,\chi_n$, we get
\be{estimKin}
|P_B\,\phi_n|^2\geq \frac{|(P_B\, \tilde\phi_n)\,\chi_n|^2}{1+\varepsilon_n}-\frac{|(P_0\,\chi_n)\,\phi_n|^2}{\varepsilon_n}\;,
\ee
for some sequence $(\varepsilon_n)_{n\in\N}$ of positive numbers, to be fixed. Hence, using the fact that $0\leq\chi_n^2\leq 1$, we get
\begin{eqnarray}
&&\int_{\R^3}\frac{|P_B\,\phi_n|^2}{1+\lambda^n+\frac{\nu_n}{|x|}}\;d^3x\nonumber\\
&&\geq\frac 1{1+\varepsilon_n}\int_{\R^3}\frac{|P_B\, \tilde\phi_n|^2}{1+\lambda^n+\frac{\nu_n}{|x|}}\;d^3x-\frac 1{\varepsilon_n}\int_{\R^3}\frac{|(P_0\,\chi_n)\,\phi_n|^2}{1+\lambda^n+\frac{\nu_n}{|x|}}\;d^3x\;.
\end{eqnarray}
The function $\tilde\phi_n$ is supported in the ball $B(0,2\,R_n)$: with $\mu_n:=(1+\varepsilon_n)\big[2(1+\lambda^n)R_n+\nu_n\big]$,
\be{}
\frac 1{1+\varepsilon_n}\int_{\R^3}\frac{|P_B\, \tilde\phi_n|^2}{1+\lambda^n+\frac{\nu_n}{|x|}}\;d^3x\geq \frac 1{\mu_n}\int_{\R^3}|x|\,|P_B\, \tilde\phi_n|^2\,d^3x\;.
\ee
We choose $\varepsilon_n$ and $R_n$ such that
\be{}
\lim_{n\to\infty}\varepsilon_n=0\;,\quad\lim_{n\to\infty}R_n=\infty\quad\mbox{and}\quad\lim_{n\to\infty}(1+\lambda^n)\,R_n=0\;,
\ee
so that
\be{}
\lim_{n\to\infty}\mu_n=\nu\;.
\ee
The function $(P_0\,\chi_n)$ is supported in $B(0,2\,R_n)\setminus B(0,R_n)$: there exists constant $\kappa$ depending on $\|\chi'\|_{L^\infty(1,2)}$ such that $|P_0\,\chi_n|^2\leq\kappa\,R_n^{-2}$ and as a consequence,
\be{}
\frac 1{\varepsilon_n}\int_{\R^3}\frac{|(P_0\,\chi_n)\,\phi_n|^2}{1+\lambda^n+\frac{\nu_n}{|x|}}\;d^3x \leq \frac\kappa{\varepsilon_n\,R_n\big[(1+\lambda^n)\,R_n+\frac{\nu_n}2\big]}\int_{\R^3}|\phi_n|^2\;d^3x\;.
\ee
Moreover,
\be{}
\nu_n\int_{\R^3}\frac{|\phi_n|^2}{|x|}\;d^3x-\nu_n\int_{\R^3}\frac{|\tilde\phi_n|^2}{|x|}\;d^3x\leq\frac{\nu_n}{R_n}\int_{\R^3}|\phi_n|^2\,d^3x\;.
\ee
Thus, with $\eta_n:=\kappa/\!\(\varepsilon_n\,R_n\((1+\lambda^n)\,R_n+\frac{\nu_n}2\)\)+\nu_n/R_n$, we can write
\begin{eqnarray}
0=J[\phi_n,\lambda^n,\nu_n,B]&\geq& \frac 1{\mu_n}\int_{\R^3}|x|\,|P_B\, \tilde\phi_n|^2\,d^3x-\nu_n\int_{\R^3}\frac{|\tilde\phi_n|^2}{|x|}\;d^3x\nonumber\\
&&\qquad+(1-\lambda^n-\eta_n)\int_{\R^3}|\phi_n|^2\,d^3x\;.
\end{eqnarray}
Assume further that
\be{}
\lim_{n\to\infty}\varepsilon_n\,R_n=\infty\;,
\ee
so that $1-\lambda^n\geq\eta_n\to 0$ as $n\to\infty$. Using again the fact that $0\leq\chi_n^2\leq 1$, we get
\begin{eqnarray}
0=J[\phi_n,\lambda^n,\nu_n,B]&\geq&\frac 1{\mu_n}\int_{\R^3}|x|\,|P_B\, \tilde\phi_n|^2\,d^3x-\nu_n\int_{\R^3}\frac{|\tilde\phi_n|^2}{|x|}\;d^3x\nonumber\\
&&\qquad+(1-\lambda^n-\eta_n)\int_{\R^3}|\tilde\phi_n|^2\,d^3x\;.
\end{eqnarray}
Let $\tilde\nu_n=\sqrt{\mu_n\,\nu_n}$. We have obtained
\begin{eqnarray}
&&\frac 1{\tilde\nu_n}\int_{\R^3}|x|\,|P_B\, \tilde\phi_n|^2\,d^3x-\tilde\nu_n\int_{\R^3}\frac{|\tilde\phi_n|^2}{|x|}\;d^3x+2\int_{\R^3}|\tilde\phi_n|^2\,d^3x\nonumber\\
&&\qquad\leq\left[2-\sqrt{\frac{\mu_n}{\nu_n}}\,(1-\lambda^n-\eta_n)\right]\int_{\R^3}|\tilde\phi_n|^2\,d^3x\;.
\end{eqnarray}
The left hand side is a decreasing function of $\tilde\nu_n$. Since $\lim_{n\to\infty} \tilde\nu_n=\nu$, for any $\nu'>\nu$, for $n$ large enough,
\begin{eqnarray}
&&\frac 1{\nu'}\int_{\R^3}|x|\,|P_B\, \tilde\phi_n|^2\,d^3x-\nu'\int_{\R^3}\frac{|\tilde\phi_n|^2}{|x|}\;d^3x+2\int_{\R^3}|\tilde\phi_n|^2\,d^3x\nonumber\\
&&\qquad\leq \left[2-\sqrt{\frac{\mu_n}{\nu_n}}\,(1-\lambda^n-\eta_n)\right]\int_{\R^3}|\tilde\phi_n|^2\,d^3x\;.
\end{eqnarray}
We observe that by construction $\tilde\phi_n$ is non trivial. By homogeneity, one can even assume that $\|\tilde\phi_n\|_{L^2(\R^3)}=1$. Since $\lim_{n\to\infty}\sqrt{\frac{\mu_n}{\nu_n}}\,(1-\lambda^n-\eta_n)=2$,
\be{}
\tilde B(\nu')\leq B=B(\nu)\quad\forall\;\nu'>\nu\;.
\ee
By Lemma \ref{Lem6}, $\nu'\mapsto\tilde B(\nu')$ is continuous. This proves that $\tilde B(\nu)\leq B(\nu)$. \finprf

\section{A Landau level ansatz}\label{Sec:LandauLevels}

In analogy with what is done in nonrelativistic quantum mechanics, we denote by {\sl first Landau level\/} for a constant magnetic field of strength $B$, see \cite{DEL}, the space of all functions $\phi$ which are linear combinations of the functions
\be{}
\phi_\ell:=\frac B{\sqrt{2\,\pi\,{2^\ell\,\ell!}}}\,(x_2+i\,x_1)^\ell\,e^{-B\,{s^2}/{4}}\,{1\choose 0}\,,\quad\ell\in\N\,,\quad s^2=x_1^2+x_2^2\;,
\ee
where the coefficients depend only on $x_3$, {\sl i.e.,\/}
\be{}
\phi(x)=\sum_\ell f_\ell(x_3)\,\phi_\ell (x_1,x_2)\;.
\ee
In this section, we shall restrict the functional $\mathcal{E}_{B,\nu}$ to the first Landau level. In this framework, that we shall call the {\sl Landau level ansatz,\/} we also define a critical field by
\be{}
B_{\mathcal L}(\nu):=\inf\left\{B>0\;:\;\liminf_{b\nearrow B}\lambda_1^{\mathcal L}(\nu,b)=-1\right\}\,,
\ee
where
\be{}
\lambda_1^{\mathcal L}(\nu,B):=\inf_{\phi\in{\mathcal A}(\nu,B)\,,\;\Pi^\perp\phi=0}\lambda[\phi,\nu,B]\;.
\ee
Here $\Pi$ is the projection of $\phi$ onto the first Landau level, and $\Pi^\perp:=\mathbb I-\Pi$. 

One can prove in the Landau level ansatz a result which is the exact counterpart of Theorem~\ref{Thm:bcritformula}. For any $\nu\in(0,1)$, if
\be{}
\mu_{\mathcal L}(\nu):=\inf_{\phi\in{\mathcal A}(\nu,B)\,,\;\Pi^\perp\phi=0}\mathcal{E}_{1,\nu}[\phi]\;,
\ee
then
\be{Landau:bcritformula}
B_{\mathcal L}(\nu)=\frac 4{\mu_{\mathcal L}(\nu)^2}\;.
\ee
The goal of this section is to compare $\mu_{\mathcal L}(\nu)$ with $\mu(\nu)$ given by \eqn{Eqn:Equiv}. By definition of these quantities, we have
\be{frombelow}
\mu(\nu)\le\mu_{\mathcal L}(\nu)\;.
\ee

\medskip With the notation $s=\sqrt{x_1^2+x_2^2}$ and $z=x_3$, if $\phi$ is in the first Landau level, then 
\be{}
\mathcal{E}_{1,\nu}[\phi]=\sum_\ell\frac 1\nu\int_0^\infty b_\ell\,{f_\ell'}^2\;dz-\nu\int_0^\infty a_\ell\,f_\ell^2\;dz\;,
\ee
where
\be{}
a_\ell(z):=\left(\phi_\ell,\frac 1r\,\phi_\ell\right)_{L^2(\R^2,\C^2)}=\frac{1}{2^\ell\,\ell!}\,\int_0^{+\infty}\frac{s^{2\ell+1}\,e^{-s^2/2}}{\sqrt{s^2+z^2}}\;ds
\ee
and
\be{}
b_\ell(z):=\left(\phi_\ell, r\,\phi_\ell\right)_{L^2(\R^2,\C^2)}=\frac{1}{2^\ell\,\ell!}\,\int_0^{+\infty}{s^{2\ell+1}\,e^{-s^2/2}}{\sqrt{s^2+z^2}}\;ds\;.
\ee
A simple integration by parts shows that $\left(\phi_\ell, F(r)\,\phi_\ell\right)_{L^2(\R^2,\C^2)}$ is increasing (resp. decreasing) in $\ell$ whenever $F(r)$ is increasing (resp. decreasing). Since $a_\ell$ and $b_\ell$ only depend on $|z|$, $\mu_{\mathcal L}(\nu)$ is also achieved by functions which only depend on $|z|$ as well. It follows that
\begin{eqnarray}
&&\sum_\ell\(\frac 1\nu\int_0^\infty b_\ell\,{f_\ell'}^2\;dz-\nu\int_0^\infty a_\ell\,f_\ell^2\;dz\)\nonumber\\
&&\qquad\ge\;\frac 1\nu\int_0^\infty b_0\(\sum_\ell{f_\ell'}^2\)\,dz-\nu\int_0^\infty a_0\(\sum_\ell f_\ell^2\)\,dz\;.
\end{eqnarray}
Using the inequality
\be{}
\left|\;\frac{d}{dz}\,\textstyle{\sqrt{\sum_\ell f_\ell^2}}\;\right |^2\le\sum_\ell{\left|\;\frac{d}{dz}\,f_\ell\;\right|}^2\,,
\ee
we find that 
\be{}
\mathcal{E}_{1,\nu}[\phi]\geq\frac 1\nu\int_0^\infty b_0\,|f'|^2\,dz-\nu\int_0^\infty a_0\,f^2\,dz\;,
\ee
with $f:=\sqrt{\sum_\ell f_\ell^2}$. In other words, for our minimization purpose, it is sufficient to consider functions of the form
\be{}
\phi(x)=f(z)\,\frac{e^{-s^2/4}}{\sqrt{2\,\pi}}\,{1\choose 0}\,.
\ee
We observe that
\be{}
\frac 12\int_{\R^3}|\phi|^2\,d^3\vec x=\int_0^\infty f^2\,dz
\ee
and
\be{}
\frac12\,\mathcal{E}_{1,\nu}[\phi]=\frac 1\nu\int_0^\infty b\,{f'}^2\,dz-\nu\int_0^\infty a\,f^2\,dz:=\mathcal L_\nu[f]\;,
\ee
with $a=a_0$, $b=b_0$, {\sl i.e.,\/}
\be{}
b(z)=\int_0^\infty\sqrt{s^2+z^2}\,s\,e^{-s^2/2}\,ds\quad\mbox{and}\quad a(z)=\int_0^\infty\frac{s\,e^{-s^2/2}}{\sqrt{s^2+z^2}}\;ds\;.
\ee
The minimization problem in the Landau level ansatz is now reduced to
\be{}
\mu_{\mathcal L}(\nu)=\inf_f\frac{\mathcal L_\nu[f]}{\|f\|^2_{L^2(\R^+)}}\;.
\ee

It is a non trivial problem to estimate how close $\mu(\nu)$ and $\mu_{\mathcal L}(\nu)$ are. Let
\be{}
\mathcal L_\nu^-[f]:=\frac 1\nu\int_0^\infty\frac 1a\,{f'}^2\,dz-\nu\int_0^\infty a\,f^2\,dz
\ee
and
\be{}
\mathcal L_\nu^+[f]:=\frac 1\nu\int_0^\infty b\,{f'}^2\,dz-\nu\int_0^\infty\frac 1b\,f^2\,dz\;,
\ee
with corresponding infima $\mu_{\mathcal L}^-(\nu)$ and $\mu_{\mathcal L}^+(\nu)$.
\begin{lemma} For any $\nu\in(0,1)$,
\be{}
\mu_{\mathcal L}^-(\nu)\leq\mu_{\mathcal L}(\nu)\leq\mu_{\mathcal L}^+(\nu)\;.
\ee
\end{lemma}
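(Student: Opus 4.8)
The plan is to derive both inequalities from a single pointwise comparison between the weights $a$ and $b$. The three functionals differ only in one weight: passing from $\mathcal L_\nu$ to $\mathcal L_\nu^-$ replaces the kinetic weight $b$ by $1/a$, while passing to $\mathcal L_\nu^+$ replaces the potential weight $a$ by $1/b$. Consequently, if one can establish the pointwise bound
\[
a(z)\,b(z)\geq 1\qquad\mbox{for all}\ z\geq 0\,,
\]
then $1/a\leq b$ and $1/b\leq a$ everywhere, and the two desired inequalities follow term by term.

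To prove $a\,b\geq 1$, I would write the two weights as moments of the probability measure $d\mu(s):=s\,e^{-s^2/2}\,ds$ on $(0,\infty)$, whose total mass is $\int_0^\infty s\,e^{-s^2/2}\,ds=1$. Indeed $a(z)=\int_0^\infty(s^2+z^2)^{-1/2}\,d\mu(s)$ and $b(z)=\int_0^\infty(s^2+z^2)^{1/2}\,d\mu(s)$. Applying the Cauchy--Schwarz inequality to the factors $(s^2+z^2)^{-1/4}$ and $(s^2+z^2)^{1/4}$ against $d\mu$ gives
\[
1=\left(\int_0^\infty d\mu\right)^2\leq\left(\int_0^\infty\frac{d\mu}{\sqrt{s^2+z^2}}\right)\left(\int_0^\infty\sqrt{s^2+z^2}\;d\mu\right)=a(z)\,b(z)\,,
\]
which is exactly the claimed bound (and is asymptotically sharp, since $a(z)\sim 1/z$ and $b(z)\sim z$ as $z\to\infty$).

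It then remains to propagate these pointwise bounds to the functionals and to the infima. Since $1/a\leq b$,
\[
\mathcal L_\nu^-[f]-\mathcal L_\nu[f]=\frac 1\nu\int_0^\infty\left(\frac 1a-b\right){f'}^2\,dz\leq 0\,,
\]
whereas since $a\geq 1/b$,
\[
\mathcal L_\nu^+[f]-\mathcal L_\nu[f]=\nu\int_0^\infty\left(a-\frac 1b\right)f^2\,dz\geq 0\,.
\]
Thus $\mathcal L_\nu^-[f]\leq\mathcal L_\nu[f]\leq\mathcal L_\nu^+[f]$ for every admissible $f$. Dividing by $\|f\|^2_{L^2(\R^+)}$ and using that the infimum of a family of Rayleigh quotients is monotone under pointwise ordering, I would conclude $\mu_{\mathcal L}^-(\nu)\leq\mu_{\mathcal L}(\nu)\leq\mu_{\mathcal L}^+(\nu)$.

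There is no genuine obstacle in this argument: the entire content is the recognition that both sandwiching bounds are governed by the single quantity $a\,b$, together with the observation that this product is bounded below by $1$ through Cauchy--Schwarz for the normalized Gaussian weight. The only points deserving a line of verification are that $a$ and $b$ are finite and strictly positive for all $z\geq 0$, so that $1/a$ and $1/b$ are admissible weights, and that the comparison of infima is legitimate; both are immediate.
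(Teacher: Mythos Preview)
Your proof is correct and follows essentially the same route as the paper: both reduce the two inequalities to the single pointwise bound $a(z)\,b(z)\geq 1$, obtained by integrating against the probability measure $s\,e^{-s^2/2}\,ds$. The only cosmetic difference is that the paper names Jensen's inequality (applied to $t\mapsto 1/t$) where you invoke Cauchy--Schwarz, which yields the identical conclusion here.
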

\begin{proof} This follows from
\be{}
b(z)\ge\frac{1}{a(z)}
\ee
which in turn follows from Jensen's inequality, noting that $s\,e^{-s^2/2}\,ds/\,2\,\pi$ is a probability measure.
\end{proof}

In \cite{DEL} it was proved that
\be{asymp-}
\log|\mu_{\mathcal L}^-(\nu)|\approx -\frac{\pi}{2\,\nu}
\ee
as $\nu\to 0_+$. The methods in \cite{DEL} can be adapted to show that
\be{asymp+}
\log|\mu_{\mathcal L}^+(\nu)|\approx -\frac{\pi}{2\,\nu}
\ee
as well, thus proving the following result.
\begin{lemma} With the above notations, $\displaystyle\lim_{\nu\to 0_+}\nu\,\log|\mu_{\mathcal L}(\nu)|= -\frac{\pi}2$.\end{lemma}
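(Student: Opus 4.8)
The plan is to obtain the result purely by a squeeze (sandwich) argument, taking as given the two-sided bound of the preceding lemma, $\mu_{\mathcal L}^-(\nu)\leq\mu_{\mathcal L}(\nu)\leq\mu_{\mathcal L}^+(\nu)$, together with the asymptotic estimates \eqn{asymp-} and \eqn{asymp+}. The genuine analytic work---establishing \eqn{asymp+} by adapting to $\mathcal{L}_\nu^+$ the methods of \cite{DEL} that yield \eqn{asymp-}---has already been carried out above, so the lemma itself reduces to elementary monotonicity bookkeeping.

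First I would check that for $\nu$ small enough all three quantities $\mu_{\mathcal L}^-(\nu)$, $\mu_{\mathcal L}(\nu)$, $\mu_{\mathcal L}^+(\nu)$ are strictly negative. This is what the statement really needs, because it involves $\log|\cdot|$, and negativity is exactly what lets me replace each infimum by its modulus while reversing the chain of inequalities. Negativity follows from the trial-function construction used in the proof of Lemma~\ref{Lem6}, restricted to the first Landau level: the same family makes each of $\mathcal{L}_\nu^-$, $\mathcal{L}_\nu$ and $\mathcal{L}_\nu^+$ take negative values, so the corresponding infima are negative. It is also implicit in \eqn{asymp-} and \eqn{asymp+}, which presuppose $\mu_{\mathcal L}^\pm(\nu)\neq 0$ and in fact $|\mu_{\mathcal L}^\pm(\nu)|\to 0$.

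With negativity in hand, passing to absolute values reverses the order, $0<|\mu_{\mathcal L}^+(\nu)|\leq|\mu_{\mathcal L}(\nu)|\leq|\mu_{\mathcal L}^-(\nu)|$; applying the (increasing) logarithm and then multiplying by $\nu>0$ preserves these, so that
\be{}
\nu\,\log|\mu_{\mathcal L}^+(\nu)|\;\leq\;\nu\,\log|\mu_{\mathcal L}(\nu)|\;\leq\;\nu\,\log|\mu_{\mathcal L}^-(\nu)|\;.
\ee
Reading \eqn{asymp-} and \eqn{asymp+} as the genuine asymptotic equivalences $\log|\mu_{\mathcal L}^\pm(\nu)|=-\frac{\pi}{2\,\nu}\,(1+o(1))$ as $\nu\to 0_+$, both outer terms tend to $-\pi/2$ (note that any subleading term of size $o(\nu^{-1})$ is killed after multiplication by $\nu$). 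The squeeze theorem then forces $\nu\,\log|\mu_{\mathcal L}(\nu)|\to-\pi/2$, which is the claim.

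The main obstacle is not internal to this lemma but upstream, in establishing \eqn{asymp+}: one must transfer the asymptotic analysis of \cite{DEL} that controls $\mu_{\mathcal L}^-$ (governed by the weights $1/a$ on $f'^2$ and $a$ on $f^2$) to the functional $\mathcal{L}_\nu^+$ (governed by $b$ and $1/b$), verifying that the relevant profiles of the two weight pairs agree to the order needed for the same constant $-\pi/2$ to appear. Granting that, the only delicate point internal to the lemma is the sign bookkeeping noted above: one must confirm that all three infima are negative before passing to $\log|\cdot|$, since the sandwich for the logarithms follows from the sandwich for the infima only once this sign is fixed.
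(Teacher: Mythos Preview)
Your proposal is correct and matches the paper's approach exactly: the paper treats this lemma as an immediate consequence of the sandwich $\mu_{\mathcal L}^-(\nu)\leq\mu_{\mathcal L}(\nu)\leq\mu_{\mathcal L}^+(\nu)$ together with \eqn{asymp-} and \eqn{asymp+}, stating it with the words ``thus proving the following result'' and no further argument. Your explicit sign bookkeeping (negativity of all three infima before taking $\log|\cdot|$) is a welcome clarification of a step the paper leaves implicit.
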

As $\nu\to 0_+$, this provides us with the leading asymptotics of the critical magnetic fields.
\begin{theorem} With the above notations, $\displaystyle\lim_{\nu\to 0_+}\;\frac{\log B(\nu)}{\log B_{\mathcal L}(\nu)}=1$.\end{theorem}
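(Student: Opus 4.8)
The plan is to reduce the statement to the single asymptotic identity $\lim_{\nu\to0_+}\nu\,\log|\mu(\nu)|=-\frac\pi2$ and then to combine it with its Landau-level counterpart supplied by the preceding lemma. By Theorem~\ref{Thm:bcritformula} and \eqn{Landau:bcritformula} one has $\log B(\nu)=\log4-2\,\log|\mu(\nu)|$ and $\log B_{\mathcal L}(\nu)=\log4-2\,\log|\mu_{\mathcal L}(\nu)|$. Since the preceding lemma gives $\nu\,\log|\mu_{\mathcal L}(\nu)|\to-\frac\pi2$, we have $\log B_{\mathcal L}(\nu)\to+\infty$; writing $\frac{\log B(\nu)}{\log B_{\mathcal L}(\nu)}=\frac{\nu\,\log4-2\,\nu\log|\mu(\nu)|}{\nu\,\log4-2\,\nu\log|\mu_{\mathcal L}(\nu)|}$ and letting $\nu\to0_+$, both numerator and denominator tend to $\pi$ as soon as $\nu\,\log|\mu(\nu)|\to-\frac\pi2$. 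So everything reduces to proving this limit for $\mu(\nu)$.

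One inequality is free. Since $\mu(\nu)\le\mu_{\mathcal L}(\nu)<0$ by \eqn{frombelow} and Lemma~\ref{Lem6}, we have $|\mu(\nu)|\ge|\mu_{\mathcal L}(\nu)|$, hence $\log|\mu(\nu)|\ge\log|\mu_{\mathcal L}(\nu)|$ and, multiplying by $\nu>0$, $\liminf_{\nu\to0_+}\nu\,\log|\mu(\nu)|\ge-\frac\pi2$. Equivalently this is the elementary bound $\log B(\nu)\le\log B_{\mathcal L}(\nu)$, so that $\limsup_{\nu\to0_+}\frac{\log B(\nu)}{\log B_{\mathcal L}(\nu)}\le1$.

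The substance of the proof is the matching bound $\limsup_{\nu\to0_+}\nu\,\log|\mu(\nu)|\le-\frac\pi2$, that is a lower bound $\mu(\nu)\ge-\exp\!\big((-\tfrac\pi2+\varepsilon)/\nu\big)$ for every $\varepsilon>0$ and all small $\nu$. I would obtain it by showing that, up to subexponential factors, the infimum \eqn{Eqn:Equiv} is controlled by its first-Landau-level part. Splitting $\phi=\Pi\phi+\Pi^\perp\phi$, on the range of $\Pi$ the functional $\mathcal E_{1,\nu}$ reduces, via the computation of Section~\ref{Sec:LandauLevels}, to the one-dimensional functional $\mathcal L_\nu$ whose normalized infimum is $\mu_{\mathcal L}(\nu)$, with the rate given by the preceding lemma. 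On the range of $\Pi^\perp$ the Pauli operator $P_1^2$ has a transverse spectral gap, its lowest transverse eigenvalue jumping from $0$ to $2$, so that $|P_1\,\Pi^\perp\phi|^2$ carries a coercive contribution of order $\|\Pi^\perp\phi\|_{L^2(\R^3)}^2$; estimating the Coulomb contribution of $\Pi^\perp\phi$ against it by a weighted Hardy inequality in the spirit of Lemma~\ref{Lem7} should show that $\Pi^\perp\phi$ can lower $\mathcal E_{1,\nu}$ only by a subexponential amount, invisible at the scale of $\nu\,\log|\mu|$. Combined with the $\Pi$-part this yields $\nu\,\log|\mu(\nu)|\to-\frac\pi2$ and hence the theorem.

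The main obstacle lies entirely in this last lower bound, and within it in the fact that the weight $|x|=\sqrt{s^2+z^2}$ does not commute with the Landau projection $\Pi$: in the unweighted energy $\int_{\R^3}|P_1\phi|^2\,d^3x$ the decomposition along $\Pi$ and $\Pi^\perp$ is orthogonal because $P_1^2$ commutes with $\Pi$, whereas the weight reintroduces cross terms $\int_{\R^3}|x|\,\mathrm{Re}\,\langle P_1\Pi\phi,P_1\Pi^\perp\phi\rangle\,d^3x$. Controlling these cross terms, together with the action of the Coulomb singularity on $\Pi^\perp\phi$, while preserving the sharp exponential rate $-\frac\pi{2\nu}$, is the delicate point. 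The saving grace is that only the logarithmic rate is needed, so any polynomial or constant-factor loss in the comparison between $\mu(\nu)$ and $\mu_{\mathcal L}(\nu)$ is harmless.
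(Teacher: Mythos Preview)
Your reduction of the statement to the single asymptotic $\nu\,\log|\mu(\nu)|\to-\frac\pi2$, together with the easy half coming from \eqn{frombelow}, is correct and exactly what the paper does. The gap is in the hard half: you outline a direct decomposition of $\mathcal E_{1,\nu}[\phi]$ along $\Pi$ and $\Pi^\perp$, but you do not carry it out, and you yourself flag the obstacle---the weight $|x|$ does not commute with $\Pi$, so cross terms appear in $\int|x|\,|P_1\phi|^2$. Saying that ``only the logarithmic rate is needed'' is not enough here: one has to show that the cross terms and the Coulomb energy of $\Pi^\perp\phi$ do not destroy the exponent $-\frac\pi{2\nu}$, and nothing in your sketch does that.

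The paper circumvents precisely this difficulty by not working directly with $\mathcal E_{1,\nu}$. It first uses that $B(\nu)>1$ for small $\nu$ (so $\lambda_1(\nu,1)>-1$) to bound $\mathcal E_{1,\nu}[\phi]$ from below by $\mathcal F_\nu[\phi]=J[\phi,\lambda_1(\nu,1),\nu,1]-(1-\lambda_1(\nu,1))\|\phi\|^2$, and then rewrites $1+\mathcal F_\nu[\phi]$ as a supremum over the lower spinor $\chi$ of the full Dirac quadratic form $\mathcal G_\nu\binom{\phi}{\chi}$. At the level of $\mathcal G_\nu$ the Landau decomposition has already been analysed in \cite{DEL}: Propositions~14 and~15 there give exactly the control you are missing, namely that restricting $\chi$ to the lowest Landau level costs only a shift $\nu\mapsto\nu+\nu^{3/2}$ and that the $\Pi^\perp$-part of $\phi$ contributes an energy bounded below by $d(\nu)\|\Pi^\perp\phi\|^2$ with $d(0)=\sqrt2>1$. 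This yields the clean two-sided estimate $\mu_{\mathcal L}^-(\nu+\nu^{3/2})\le\mu(\nu)\le\mu_{\mathcal L}(\nu)$, from which the limit follows via \eqn{asymp-}. So the missing ingredient in your proposal is this passage through the $\sup_\chi$ formulation and the appeal to the decomposition lemmas of \cite{DEL}; without it the lower bound on $\mu(\nu)$ is not established.
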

\noindent{\bf Remark. }{\it This result does not prove that $\frac{B(\nu)}{B_{\mathcal L}(\nu)}$ converges to some finite limit as $\nu\to 0_+$.}\smallskip

\begin{proof} By \eqn{frombelow}, we already know that $\mu(\nu)\leq\mu_{\mathcal L}(\nu)$. To prove our result, we need an estimate of $\mu(\nu)$ from below. Since for $\nu$ small, $B(\nu)$ is very large according to \cite[Corollary 11]{DEL}, we can assume that $B(\nu)>1$ for any $\nu\in(0,\bar\nu)$ for some $\bar\nu>0$, so that, if $\nu\in(0,\bar\nu)$, then $\lambda_1(\nu,1)>-1$ and therefore, for all $\phi$,
\be{}
\mathcal{E}_{1,\nu}[\phi]\geq\mathcal{F}_\nu[\phi]:=\int_{\R^3}{\frac{|\sigma\cdot\nabla_1\phi|^2}{\lambda_1(\nu,1)+1+\frac\nu{|x|}}}\,\,d^3x-\int_{\R^3}\frac{\nu}{|x|}\,|\phi|^2\,d^3x\;.
\ee
For completeness, we sketch the main steps of the proof, which is similar to the one given in \cite{DEL}. Since
\be{}
\mathcal{G}_\nu{\phi \choose \chi}:=\left(H_B{\phi \choose \chi},{\phi \choose \chi}\right)
\ee
is concave in $\chi$,
\be{}
1+\mathcal{F}_\nu[\phi]=\sup_\chi\;\frac{\mathcal{G}_\nu{\phi \choose \chi}}{\|\phi\|_{L^2(\R^3)}^2+\|\chi\|_{L^2(\R^3)}^2}\;.
\ee
Obviously, 
\be{}
\sup_\chi\;\frac{\mathcal{G}_\nu{\phi \choose \chi}}{\|\phi\|_{L^2(\R^3)}+\|\chi\|_{L^2(\R^3)}}\geq\sup_{\Pi^\perp\chi=0}\;\frac{\mathcal{G}_\nu{\phi \choose \chi}}{\|\phi\|_{L^2(\R^3)}+\|\chi\|_{L^2(\R^3)}}\;.
\ee
This, by \cite[Proposition 14]{DEL}, is bounded below by
\be{58}
\sup_\chi\;\frac{\mathcal{G}_{\nu+\nu^{3/2}}\,\binom{\,\Pi\phi\,}{\Pi\chi} +\mathcal{G}_{\nu+\sqrt\nu}\,\binom{\,\Pi^\perp\phi\,}{0}}{\|\Pi\phi\|_{L^2(\R^3)}+\|\Pi^\perp\phi\|_{L^2(\R^3)}+\|\Pi\chi\|_{L^2(\R^3)}}
\ee
Moreover, by \cite[Proposition 15]{DEL}, for $\nu$ small enough, this is bounded below by
\be{59}
\sup_\chi\;\frac{\mathcal{G}_{\nu+\nu^{3/2}}\,\binom{\,\Pi\phi\,}{\Pi\chi} + d(\nu)\,\|\Pi^\perp\phi\|^2_{L^2(\R^3)}}{\|\Pi\phi\|_{L^2(\R^3)}+\|\Pi^\perp\phi\|_{L^2(\R^3)}+\|\Pi\chi\|_{L^2(\R^3)}}\,,
\ee
where $d(\cdot)$ is a continuous function such that $d(0)=\sqrt{2}$. The inequality that leads to~\eqn{59} displays the fact that being perpendicular to the lowest Landau level raises the energy.

Again by concavity, for all $\phi$, there is a unique $\chi$ realizing
\be{}
\sup_\chi\;\frac{\mathcal{G}_{\nu+\nu^{3/2}}\,\binom{\,\Pi\phi\,}{\Pi\chi}}{\|\Pi\phi\|_{L^2(\R^3)}+\|\Pi\chi\|_{L^2(\R^3)}}=:\lambda^-_{\nu+\nu^{3/2}}[\phi]\;.
\ee
Since $\lambda^-_{\nu+\nu^{3/2}}[\phi]<1<\sqrt{2}$, we finally have
\be{}
1+\mathcal{F}_\nu[\phi]\geq\lambda^-_{\nu+\nu^{3/2}}[\phi]\;.
\ee
By Theorem 16 and Corollary 17 of \cite{DEL},
\be{}
\lambda^-_{\nu+\nu^{3/2}}[\phi]=1+\mu_{\mathcal L}^-(\nu+\nu^{3/2})
\ee
and therefore, for $\nu$ small enough, we have proved that
\be{comparison}
\mu_{\mathcal L}^-(\nu+\nu^{3/2})\leq\mu(\nu)\le\mu_{\mathcal L}(\nu)\;,
\ee
where the upper estimate is given by Inequality~\eqn{frombelow}. Since $\nu\to 0_+$, $\nu^{3/2}$ becomes insignificant compared to $\nu$. The conclusion then holds by Theorem~\ref{Thm:bcritformula}. \end{proof}

\section{Numerical results}\label{Sec:Numerics}

\subsection{Computations in the Landau level ansatz}\label{Sec:NumLL}

To compute $\mu_{\mathcal L}$, we minimize $\mathcal L_\nu[f]/\|f\|^2_{L^2(\R^+)}$ on the set of the solutions $f_\lambda$ of
\be{}
{f}''+\frac{z\,a(z)}{b(z)}\,f'+\frac\nu{b(z)}\,(\lambda+\nu\,a(z))\,f=0\;,\quad
f(0)=1\;,\; f'(0)=0\;.
\ee
We notice that $b'(z)=z\,a(z)$, and, for any $z>0$,
\be{}
a(z)=e^{\frac{z^2}{2}} \sqrt{\frac{\pi }{2}}\,\mbox{\rm erfc}\left(\frac{z}{\sqrt{2}}\right)\quad\mbox{and}\quad b(z)=e^{\frac{z^2}{2}} \sqrt{\frac{\pi }{2}}\,\mbox{\rm erfc}\left(\frac{z}{\sqrt{2}}\right)+z\;.
\ee
Numerically, we use a shooting method and minimize $g(\lambda,z_{\rm max}):=|f_\lambda(z_{\rm max})|^2+|f'_\lambda(z_{\rm max})|^2\, $ for some $z_{\rm max}$ large enough. As $z_{\rm max}\to\infty$, the first minimum $\mu_{\mathcal L}(\nu,z_{\rm max})$ of $\lambda\mapsto g(\lambda,z_{\rm max})$ converges to $0$ and thus determines $\lambda=\mu_{\mathcal L}(\nu)$. See Figure 1.

\begin{figure}[ht]\begin{center}\includegraphics{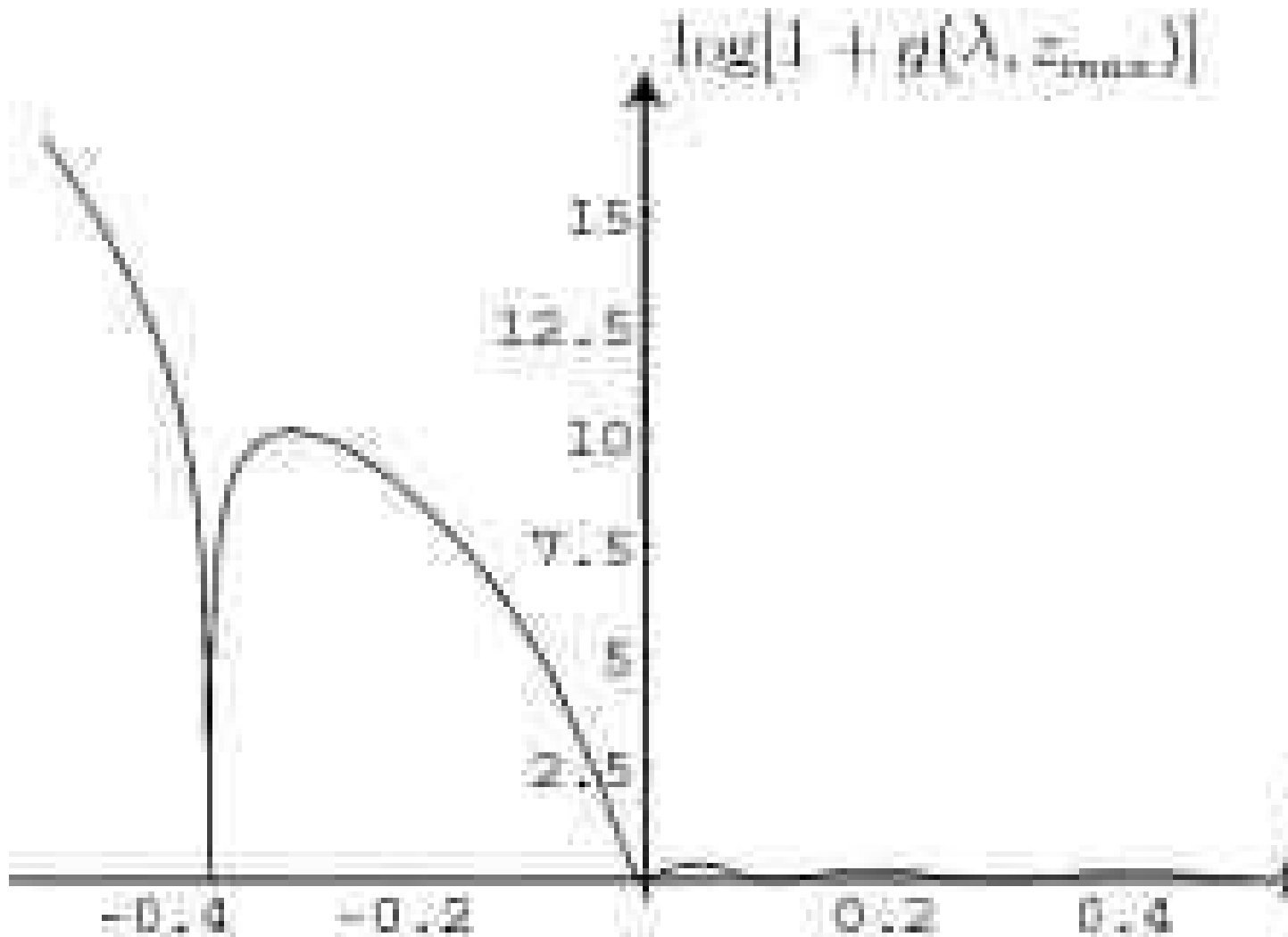}\caption{\sl Plot of $\lambda\mapsto\log[1+g(\lambda,z_{\rm max})]$ with $z_{\rm max}=100$, for $\nu=0.9$.}\end{center}\end{figure}

\medskip 
Let $b=\frac{m^2c^2}{e\,\hbar}\approx 4.414\cdot10^9$ be the numerical factor to obtain the critical field in Tesla. Corresponding values are given in $\log_{10}$ scale. The minimum $\mu_{\mathcal L}(\nu,z_{\rm max})$ is found by dichotomy. Results computed with Mathematica are given in Table 1. 

\begin{table}[htdp]
\footnotesize
\begin{center}
\begin{tabular}{|c|c|c|c|c|}
\hline
$\nu$&$Z$&$\mu_{\mathcal L}$&$B_{\mathcal L}(\nu)$&$\log_{10}(b\,B_{\mathcal L}(\nu))$\cr
\hline\hline
0.409&56.&-0.0461591&1877.35&12.9184\cr\hline
0.5&68.52&-0.0887408&507.941&12.3506\cr\hline
0.598&82.&-0.14525&189.596&11.9227\cr\hline
0.671&92.&-0.192837&107.567&11.6765\cr\hline
0.9&123.33&-0.363773&30.2274&11.1252\cr\hline
1&137.037&-0.445997&20.1093&10.9482\cr\hline\hline
\end{tabular}\vspace*{12pt}
\caption{\sl Numerical values found in the Landau level ansatz.}\end{center}
\end{table}

There is no significant difference with the results that were found in \cite{SchlueterEtAl}. See Fig. 2.

\begin{figure}[ht]\begin{center}\includegraphics[height=4.5cm]{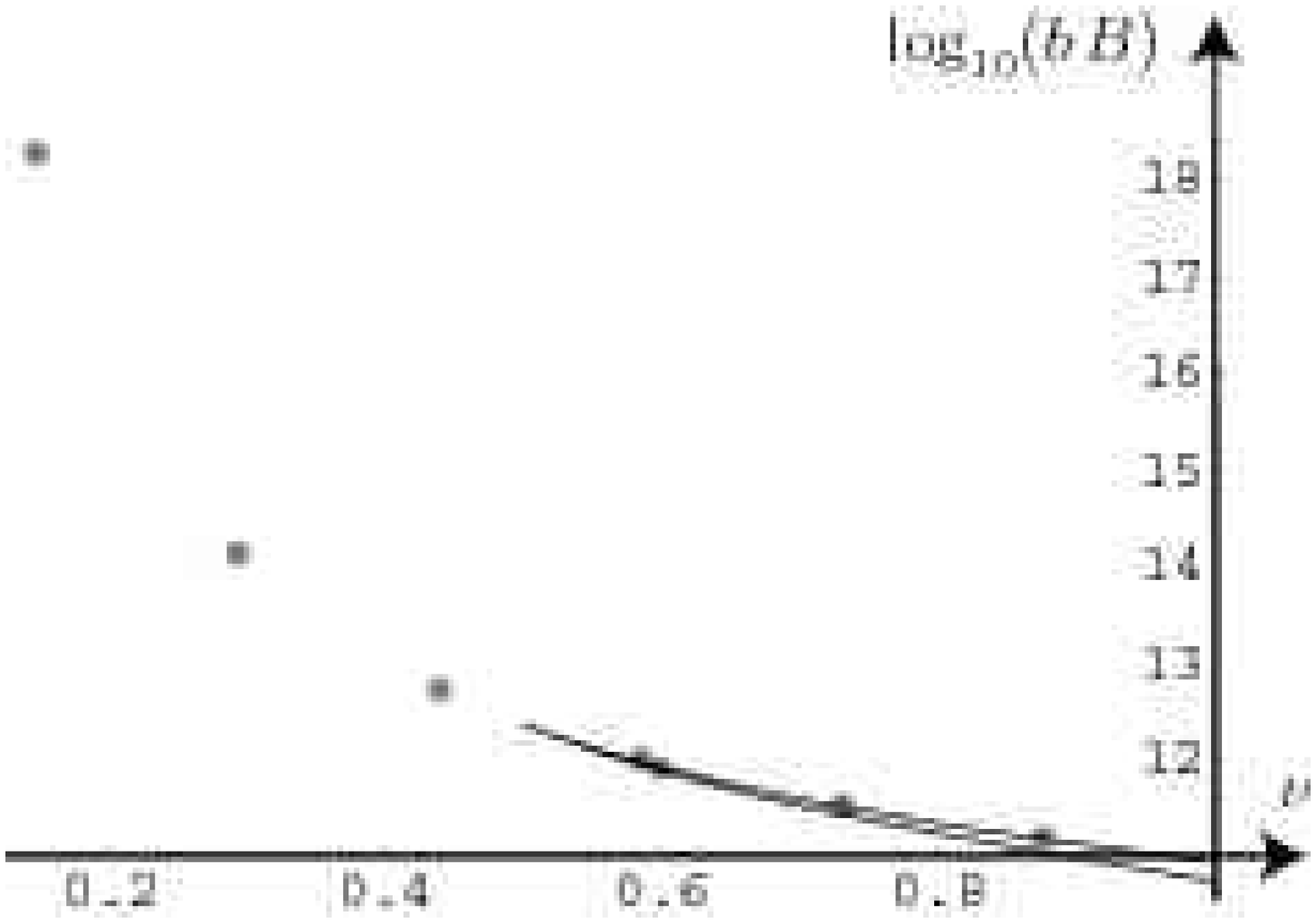}\hspace*{1cm}\includegraphics[height=4.5cm]{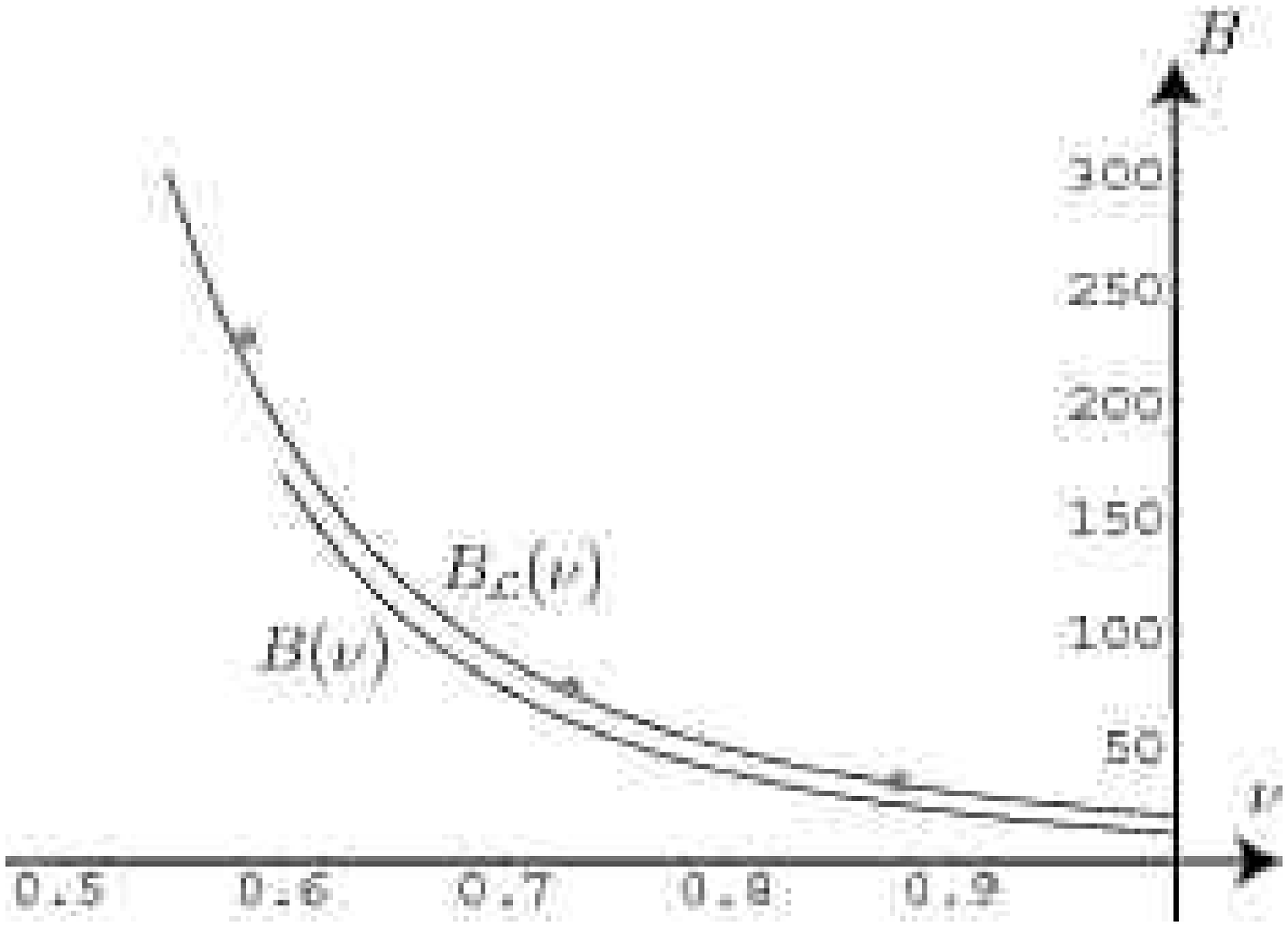}\caption{\sl Left: values of the critical magnetic field in Tesla ($\log_{10}$ scale). Right: values in dimensionless units. Ground state levels in the Landau level ansatz correspond to the upper curve, while the ones obtained without approximation are given by the lower curve. Dots correspond to the values computed by Schl\"uter, Wietschorke \& Greiner \cite{SchlueterEtAl} in the Landau level ansatz.}\end{center}\end{figure}

\subsection{Computations in the unconstrained case}\label{Sec:NumDirect}

We numerically compute $B(\nu)$ in the general case, without ansatz. For this purpose, we discretize the minimization problem defining $\mu(\nu)$ using B-spline functions of degree~$1$. They are defined on a logarithmic, variable step-size grid, in order to capture the behavior of the eigenfunctions near the singularity. We also use cylindrical symmetry to lower the dimension from $3$ to $2$. These two choices provide us with very sparse matrices, even if large. We use Matlab routines to calculate all integrals and the eigenvalues of the corresponding discretized matrices. The size of the computing domain is adapted as $\nu$ varies. Results of these computations are shown in Table 2.

\begin{table}[ht]
\footnotesize
\begin{center}
\begin{tabular}{|c|c|c|c|c|}
\hline
$\nu$&$Z$&$\lambda_1$&$B(\nu)$&$\log_{10}(b\,B(\nu))$\cr
\hline\hline
0.50&68.5185&-0.0874214&523.389&12.3637\cr\hline
0.55&75.3704&-0.119458&280.305&12.0925\cr\hline
0.60&82.2222&-0.153882&168.922&11.8725\cr\hline
0.65&89.0741&-0.191037&109.604&11.6847\cr\hline
0.70&95.9259&-0.231198&74.833&11.5189\cr\hline
0.75&102.778&-0.274665&53.0216&11.3693\cr\hline
0.80&109.63&-0.321875&38.6087&11.2315\cr\hline
0.85&116.481&-0.373535&28.668&11.1022\cr\hline
0.90&123.333&-0.430854&21.5476&10.9782\cr\hline
0.95&130.185&-0.496005&16.2588&10.8559\cr\hline
1.00&137.037&-0.573221&12.1735&10.7302\cr\hline\hline
\end{tabular}\vspace*{12pt}
\caption{\sl Results of the minimization method without symmetry ansatz.}\end{center}
\end{table}

\subsection{Discussion}\label{Sec:NumComments}

When dealing with the physics of magnetars, one is interested only in the order of magnitude of the critical magnetic field. With this goal in mind, the values computed in the Landau level ansatz are quite satisfactory. The corresponding values are given in the right column of Table 3, $\log_{10}(b\,B(\nu))$, where $b$ is approximatively $4.414\cdot10^9$ Tesla. See Fig.~2~(left).

Except maybe in the limit $\nu\to 0$, it is however clear from Theorem~\ref{Thm:bcritformula} that the equivalent eigenvalue problem~\eqn{Eqn:Equiv} has nothing to do with its counterpart $\mu_{\mathcal L}(\nu)$ in the Landau level ansatz. What our computations show is that the values of the computed critical fields significantly differ, see Fig.~3, and that the shapes of the corresponding ground state do not have much in common, see Fig. 4. 

\begin{figure}[ht]\begin{center}\includegraphics[height=5cm]{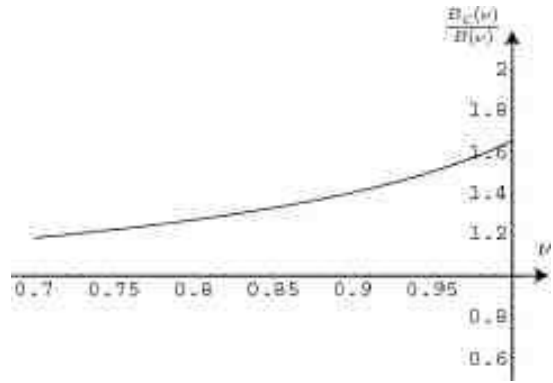}\caption{\sl Ratio of the ground state levels computed in the Landau level ansatz {\sl versus\/} ground state levels obtained by a computation without any symmetry ansatz.}\end{center}\end{figure}

\begin{figure}[ht]\begin{center}\includegraphics[height=4.5cm]{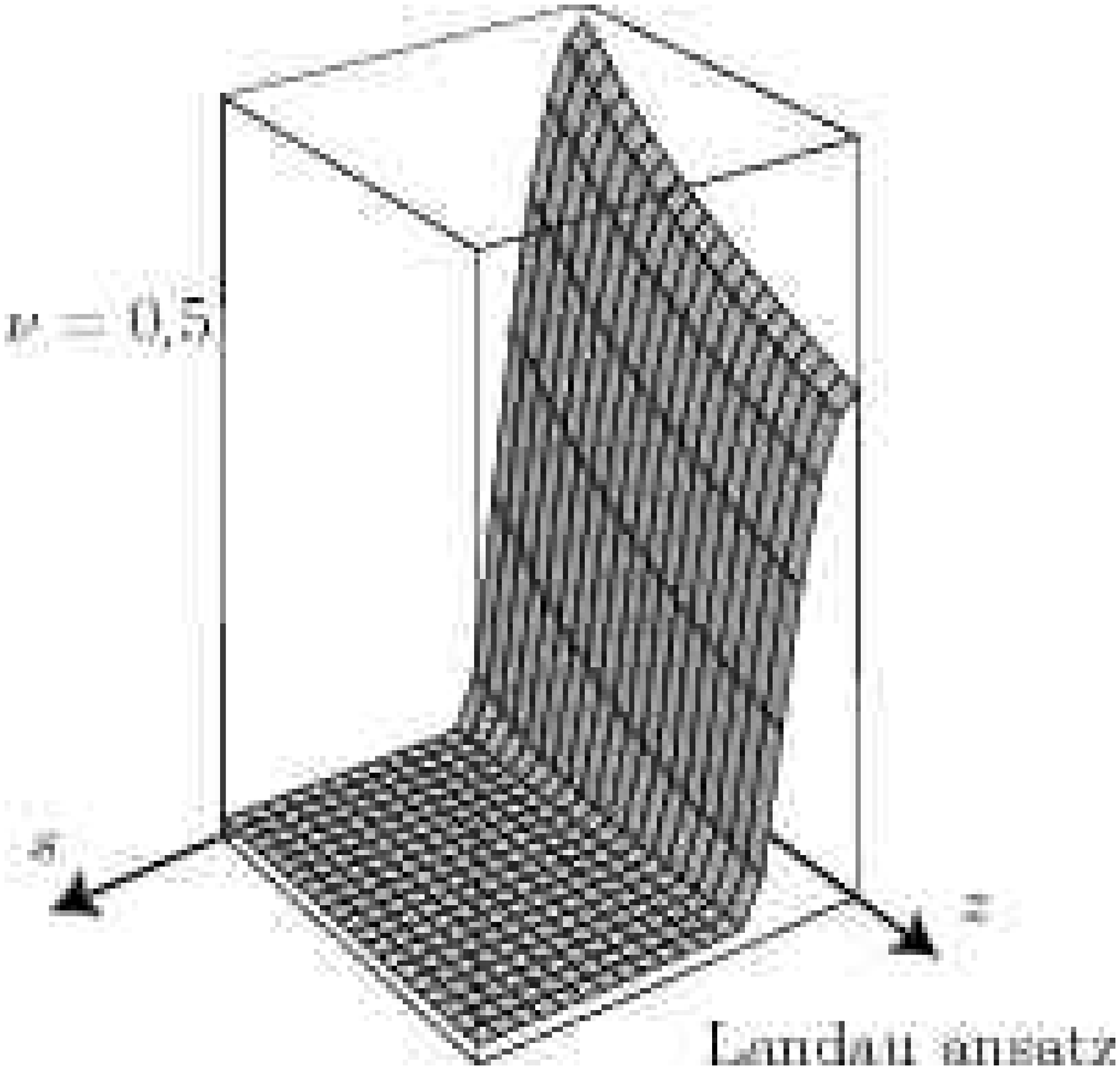}\hspace*{0.2cm}\includegraphics[height=4.5cm]{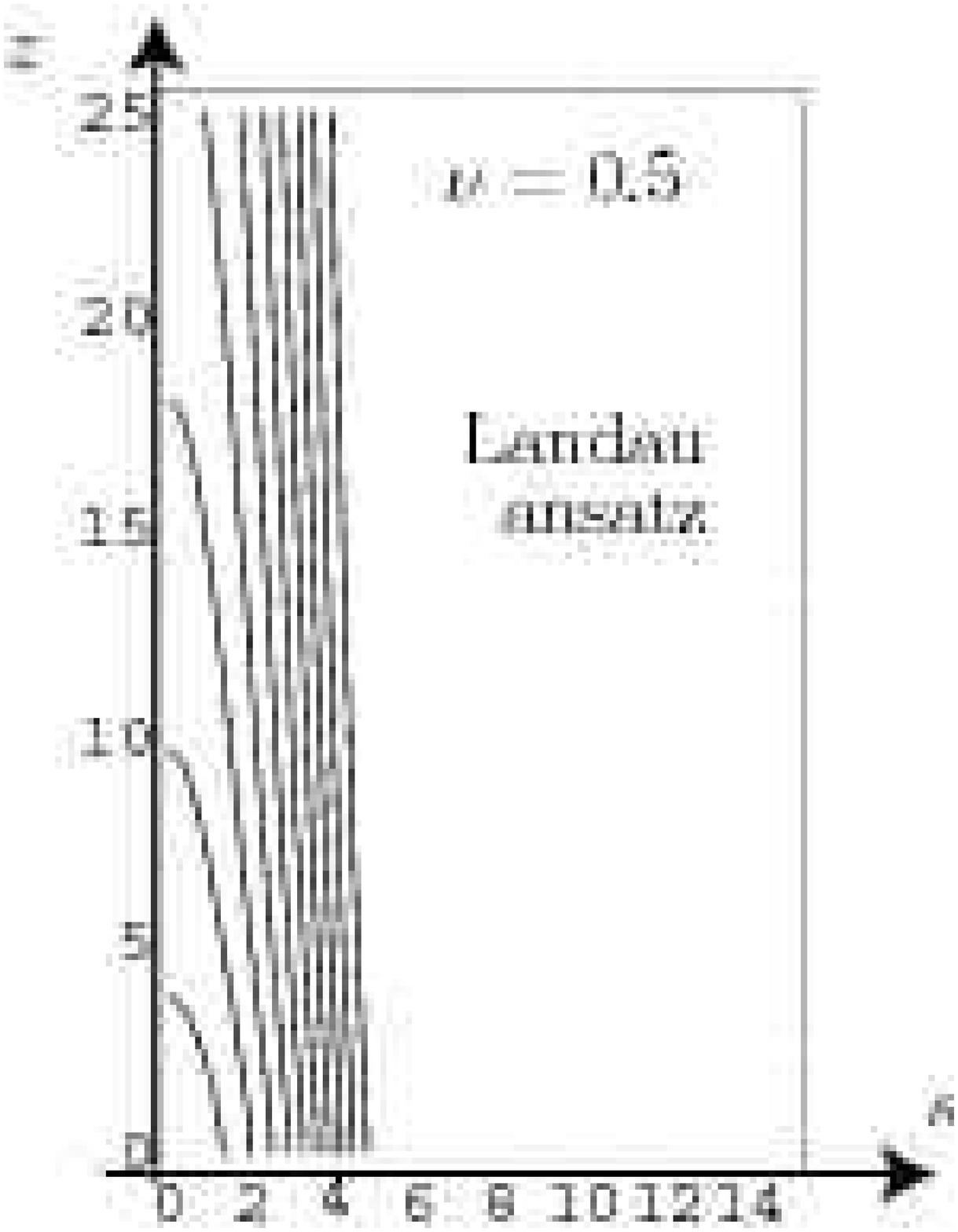}\hspace*{0.3cm}\includegraphics[height=4.5cm]{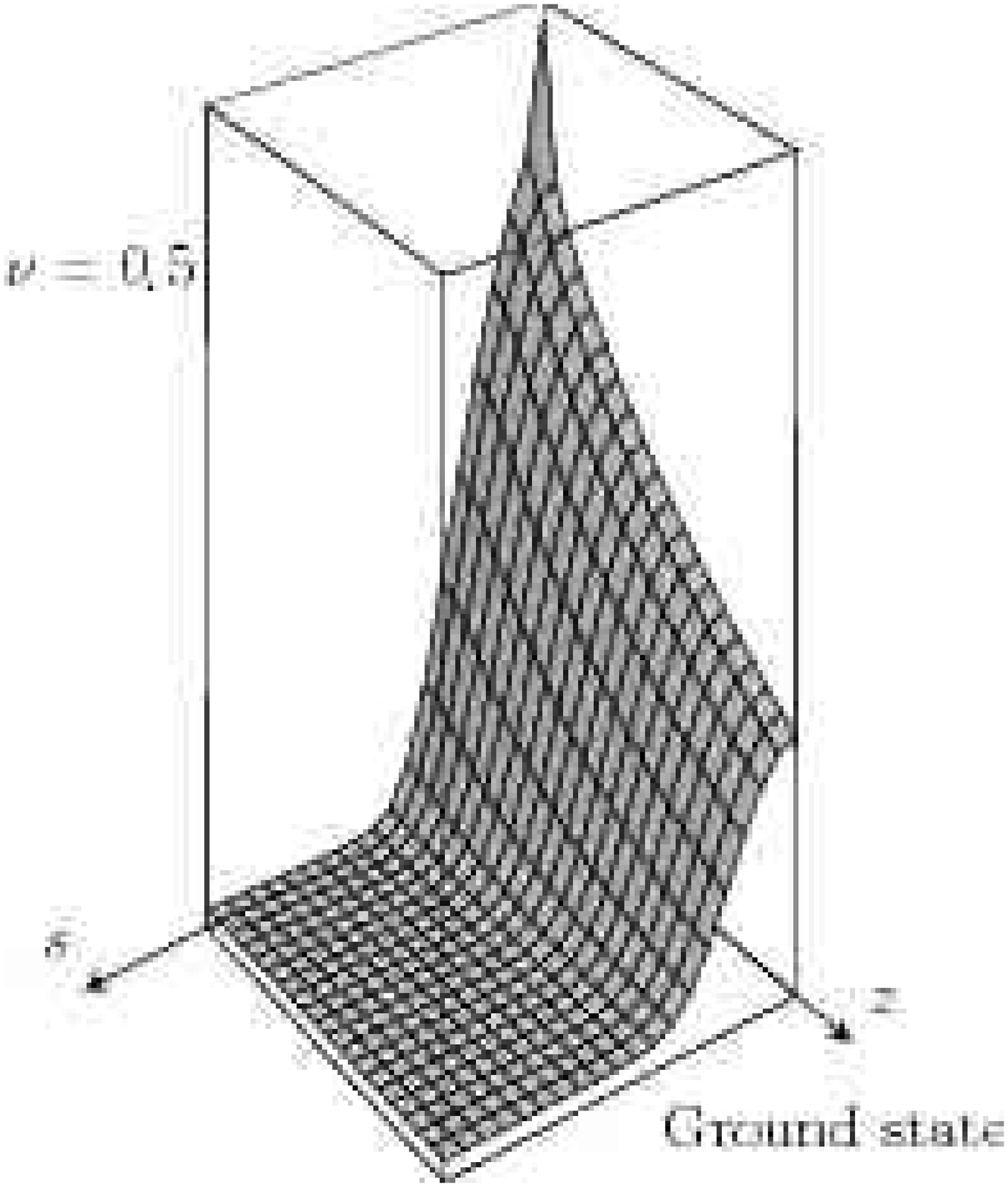}\hspace*{0.2cm}\includegraphics[height=4.5cm]{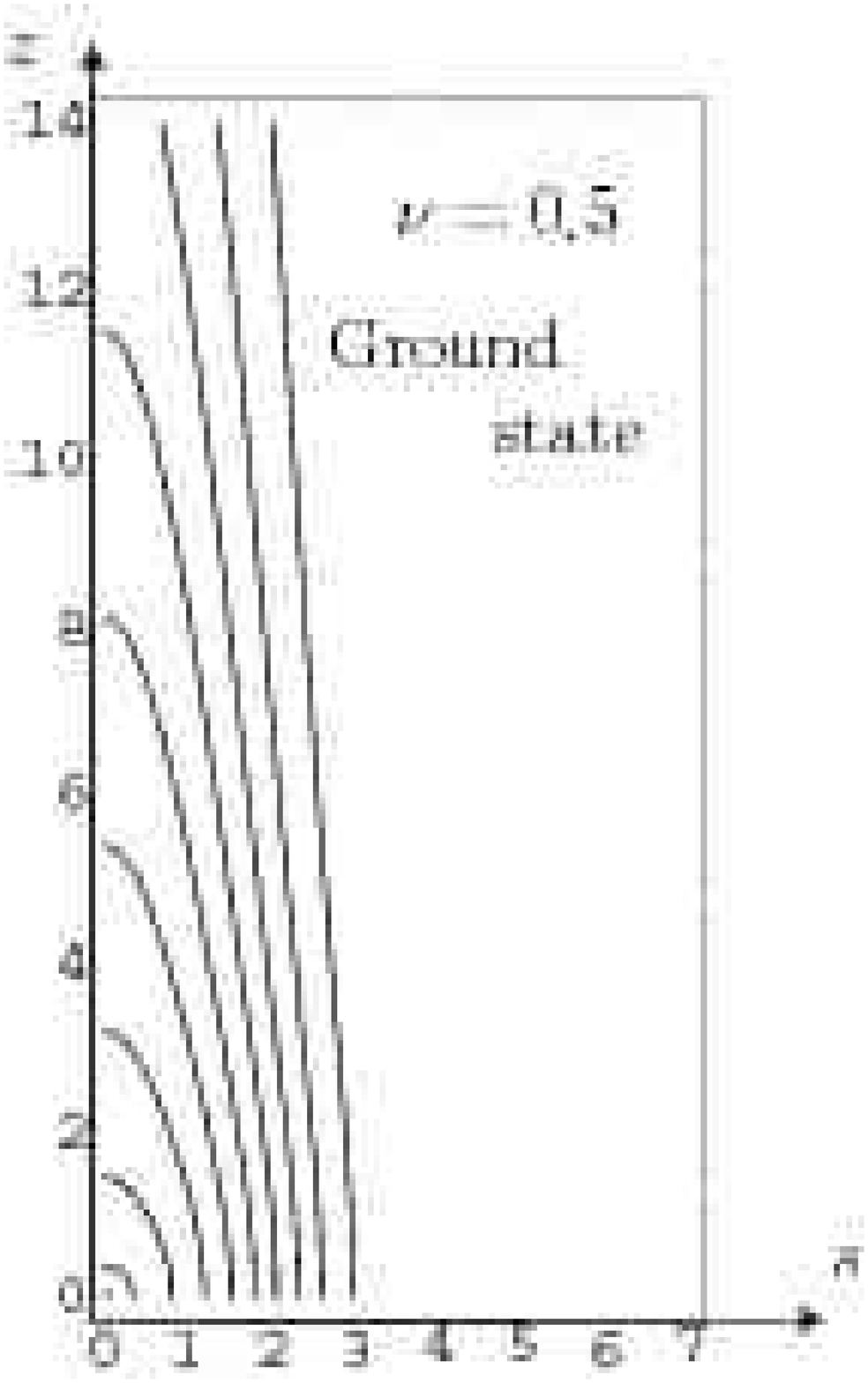}\newline\newline\hspace*{0.1cm}\includegraphics[height=4.5cm]{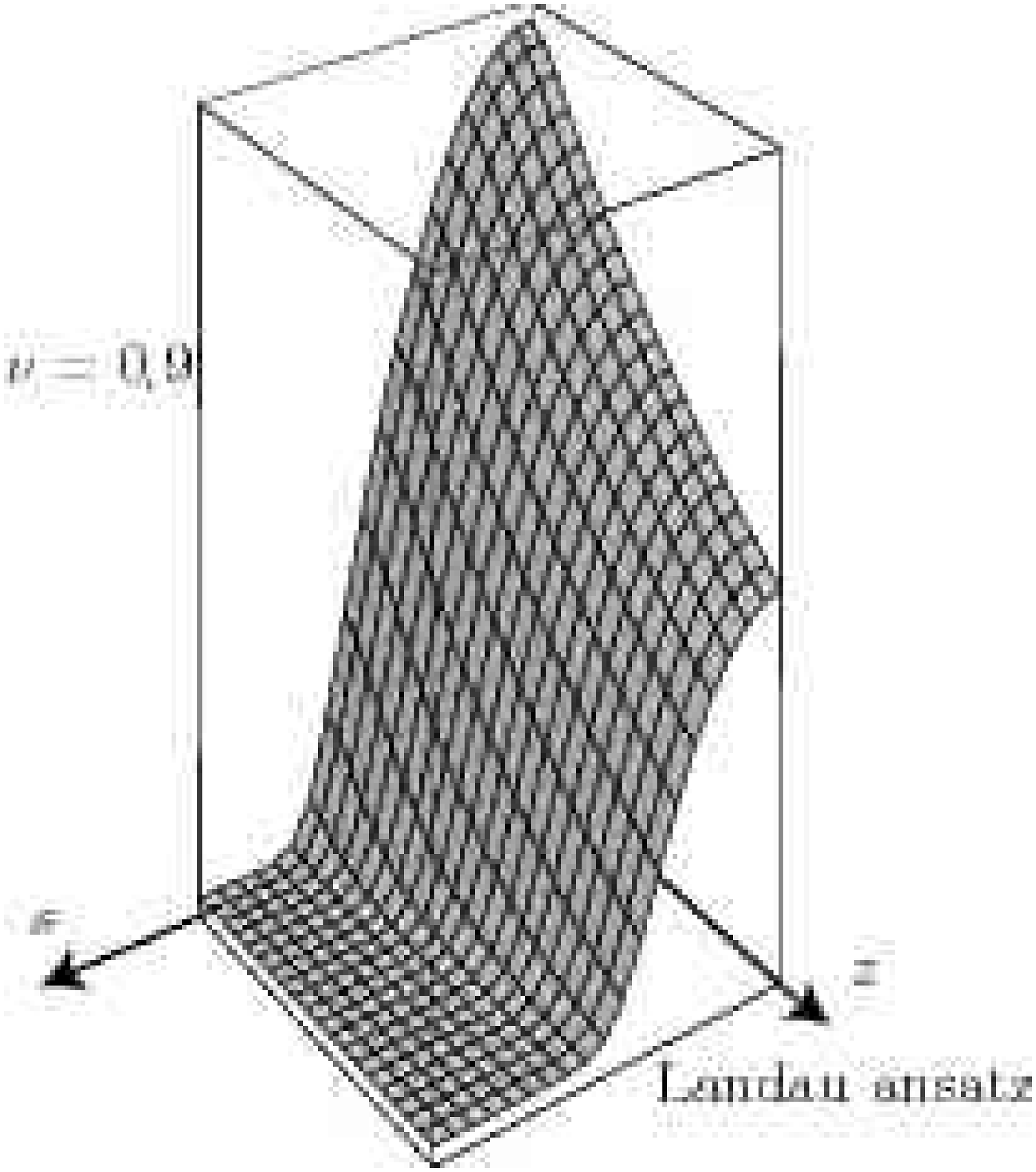}\hspace*{0.7cm}\includegraphics[height=4.5cm]{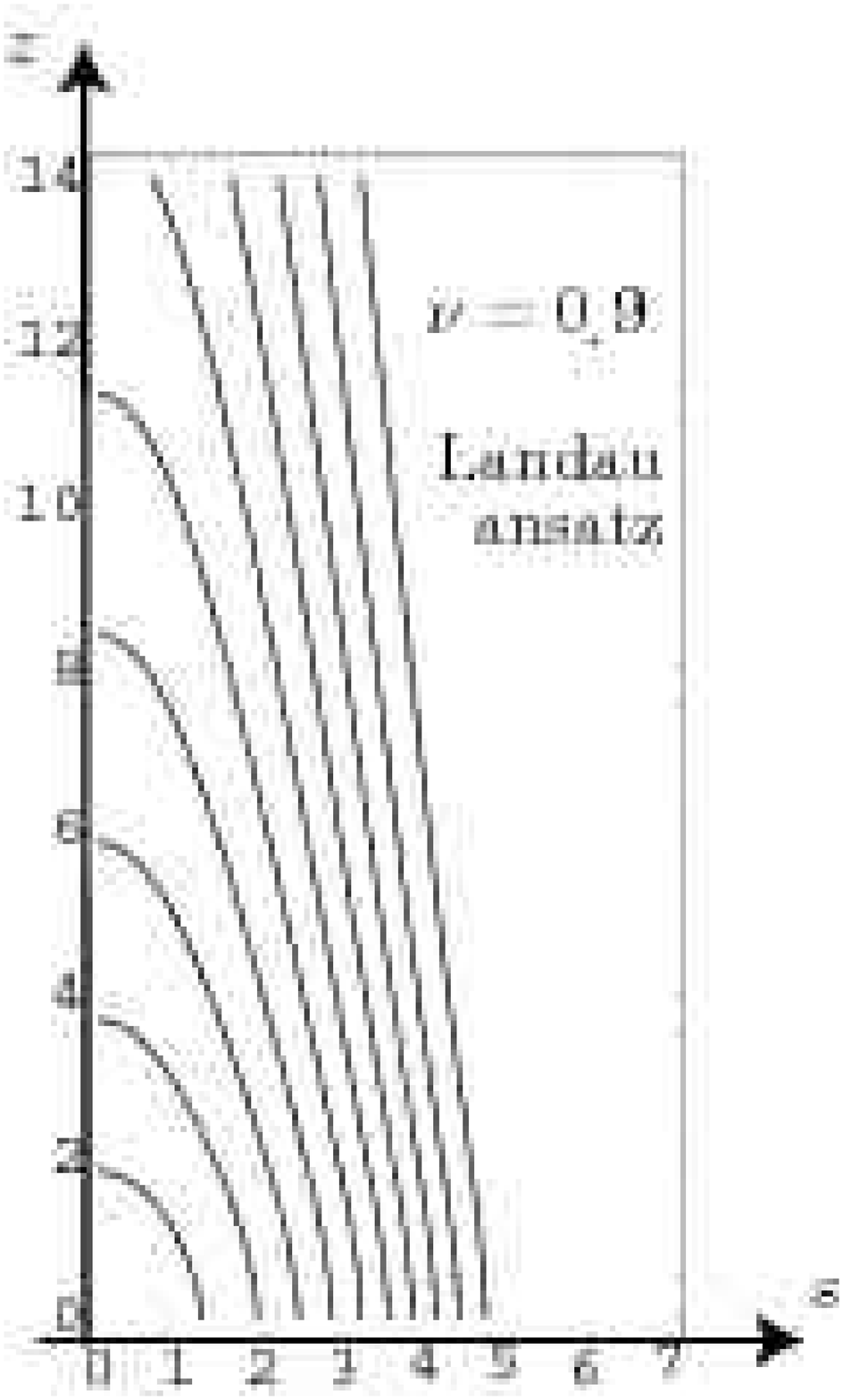}\hspace*{0.8cm}\includegraphics[height=4.5cm]{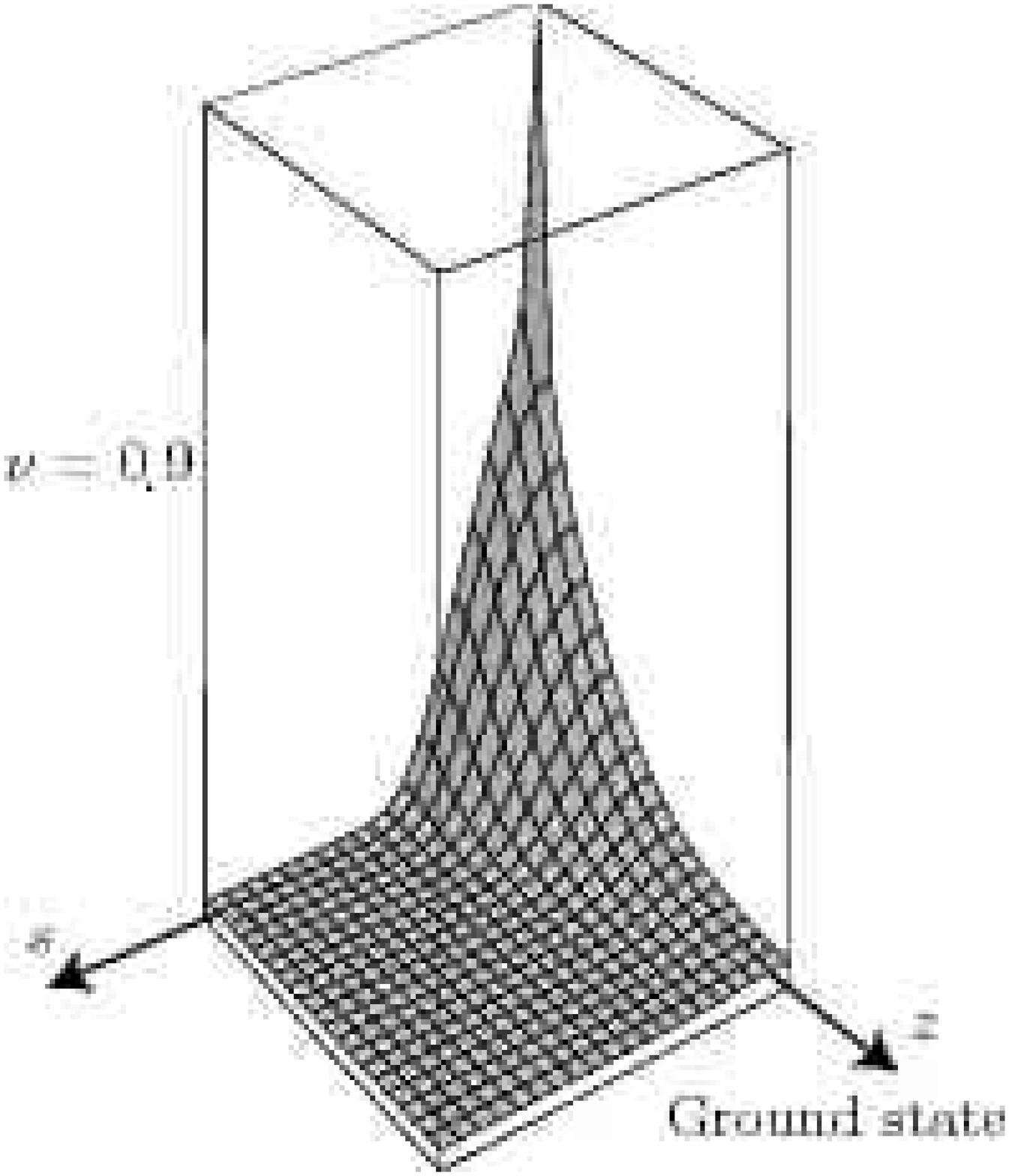}\hspace*{0.3cm}\includegraphics[height=4.5cm]{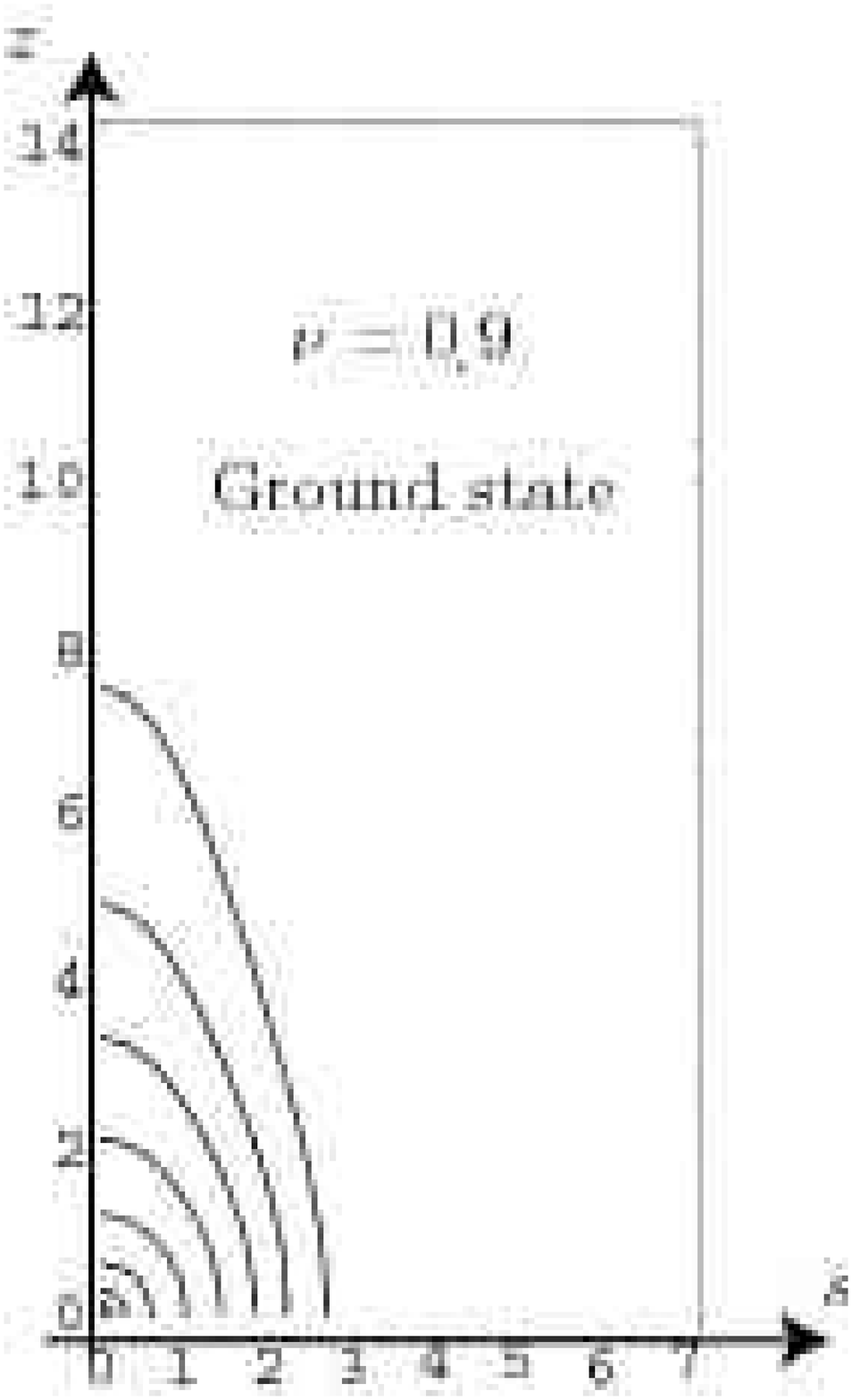}\caption {Plot of $\log_{\rm 10}(10^{-4}+|\phi|^4)$ for $\nu=0.5$ (top) and $\nu=0.9$ (bottom). Left: Landau level ansatz. Right: computation without any symmetry constraint. Level lines are equidistributed.}\end{center}\end{figure}

\section{Conclusion}\label{Sec:Conclusion}

Orders of magnitude of the critical magnetic field given by the Landau level ansatz are similar to the ones obtained without constraint. Qualitatively, the curve of the critical magnetic field in terms of the charge, is also well reproduced by the Landau level ansatz. 

However, the difference of the values of the critical magnetic fields is not small when comparison is done for values of $\nu$ approaching $1$. It turns out that values given by the Landau level ansatz are 50 \% higher than the ones found by a minimization approach without symmetry ansatz. Shapes of the density distributions also differ significantly. They are much more peaked around the singularity when computed unconstrained than in the Landau level ansatz.

The Landau level ansatz, which is commonly accepted in non relativistic quantum mechanics as a good approximation for large magnetic fields, is a quite crude approximation for the computation of the critical magnetic field (that is the strength of the field at which the lowest eigenvalue in the gap reaches its lower end) in the Dirac-Coulomb model. Even for small values of $\nu$, which were out of reach in our numerical study, it is not clear that the Landau level ansatz gives the correct approximation at first order in terms of $\nu$. Hence, accurate numerical computations involving the Dirac equation cannot simply rely on the Landau level ansatz.

\ack {J.D. and M.J.E. acknowledge support from ANR Accquarel project. M.L. is partially supported by U.S. National Science Foundation grant DMS 06-00037. The authors thank Joachim Reinhardt for pointing them \cite{SchlueterEtAl}.}

\medskip\noindent{\scriptsize\copyright~2007 by the authors. This paper may be reproduced, in its entirety, for non-commercial purposes.}

\section*{References}

\end{document}